\newtheorem{theorem}{Theorem}
\theoremstyle{plain}
\newtheorem{lemma}[theorem]{Lemma}
\newtheorem{definition}[theorem]{Definition}
\newtheorem{proposition}[theorem]{Proposition}
\newtheorem{corollary}[theorem]{Corollary}
\newtheorem{remark}[theorem]{Remark}
\numberwithin{equation}{section}
\numberwithin{theorem}{section}
\newcommand{\cB}{\mathcal{B}}
\newcommand{\cE}{\mathcal{E}}
\newcommand{\cK}{\mathcal{K}}
\newcommand{\cJ}{\mathcal{J}}
\newcommand{\cF}{\mathcal{F}}
\newcommand{\cT}{\mathcal{T}}
\newcommand{\cW}{\mathcal{W}}
\newcommand{\E}{\mathbb{E}}
\newcommand{\R}{\mathbb{R}}
\newcommand{\F}{\mathbb{F}}
\newcommand{\N}{\mathbb{N}}
\newcommand{\bP}{\mathbb{P}} 
\def\e{\varepsilon}
\def\ol{\overline}
\def\Ito{It\^{o} }
\begin{document}
\title
{Observability Inequality of Backward Stochastic Heat Equations for Measurable
  Sets and Its Applications}

\author{Donghui Yang}
\author{Jie Zhong}
\address{Donghui Yang: School of Mathematics and Statistics,
    Central South University, Changsha, 410075, P.R. China}
\email{donghyang@139.com}
\thanks{The first author was supported by the
    National Science Foundation of China and the grant of China
    Scholarship Council.}
\address{Jie Zhong: Department of Mathematics,
    University of Central Florida, Orlando, 32826, USA} 
\email{jiezhongmath@gmail.com}

\keywords{observability inequality, stochastic heat equation, optimal control, optimal actuator location, Nash equilibrium}
\subjclass[2010]{35K05, 49J20, 93B05, 93B07, 93E20}

\maketitle

\begin{abstract}
This paper aims to provide directly the observability inequality of
 backward stochastic heat equations for measurable sets. As an
 immediate application, the null controllability of the forward heat equations is
 obtained. Moreover, an
 interesting relaxed optimal actuator location problem is formulated, and the existence of its solution is
 proved. Finally, the solution is characterized by
 a Nash equilibrium of the associated game problem.
\end{abstract}

\pagestyle{myheadings}
\thispagestyle{plain}
\markboth{OBSERVABILITY INEQUALITY AND ITS APPLICATIONS}{DONGHUI YANG AND JIE ZHONG}

\section{Introduction}
Observability inequality is an important and powerful tool for the study of
stabilization and controllability problems of partial differential
equations. However, most of related works for heat equations concern with the internal
control living on an open subset. Recently, the authors in
\cite{Apraiz2014, PW13} establish the observability inequality of the heat
equation for the measurable subsets, and show the null controllability
with controls restricted over these sets. This generalization
facilitates the study of the optimal actuator location problem for
a wider class of equations. For example, compared to the one
dimensional case studied in \cite{Allaire2010} and a special class of
controlled domains considered in \cite{Guo2014}, the authors in
\cite{Guo2015} investigate the optimal actuator location of the
minimum norm controls for heat equations in arbitrary dimensions,
and the actuator domain is only required to have a prescribed Lebesgue
measure.

One of the main
contributions of this paper is the direct derivation of the observability inequality for
stochastic backward heat equations for measurable subsets,  which is considered very challenging and difficult 
in \cite[page 99
and page 108-110]{Viorel2003}. By duality we obtain the
null controllability for the corresponding forward equation. Our
results extend the deterministic case to the stochastic
counterpart. It is worth noting that we cannot simply mimic the
calculations in the deterministic case by applying the time change technique, and
treat the backward and forward equations in the same way, since
adaptedness is always required in the stochastic system. On the other
hand, our observability estimate also recovers the result in
\cite[Proposition 4.1]{Lu2011}, where only open controlled domain is
considered, and the result is obtained by null controllability. For more general stochastic parabolic equations,
but with two controls, we refer the reader to the work in \cite{Tang2009}.

As an important application, we consider the optimal actuator location of
the minimum norm control
problem for internal null controllable stochastic heat equations. In
fact, the actuator location problem for deterministic equations has
been widely studied; see for example,
\cite{Allaire2010,Darivandi2013,Guo2015,Privat2014}, and also
numerical research in \cite{Munch2008,Munch2009,Tiba2011}. To
the best of our knowledge, this paper is the first attempt to consider the
shape optimization for the stochastic system. We
show the existence of the minimum norm control, which is done by
solving a variational problem with suitable norms guaranteed by the
observability inequality. Then we prove the existence of the relaxed
optimal actuator location and characterize the solution of the relaxed
problem via a Nash equilibrium.

Before we state our main theorems, let us introduce necessary notations.

Let 
$T>0$ be a fixed positive time constant, 
and $D$ be a bounded domain in 
$\R^d$ with a $C^2$ boundary 
$\partial D$. Let 
$E$ and 
$G$ be measurable subsets with positive measures of 
$[0,T]$ and 
$D$, respectively. 

Throughout this paper, we denote by
$(\cdot, \cdot)$ the inner product in $L^2(D)$, and denote by
$\|\cdot\|$ the norm induced by $(\cdot,\cdot)$. We also use the
notations $(\cdot,\cdot)_G$ and $\|\cdot\|_G$ for the inner product
and the norm defined on $L^2(G)$, respectively. We denote by $|\cdot|$
the Lebesgue measure on $\R^d$.

Let $\F = (\Omega, \cF,\{\cF_t\}_{t\ge 0}, \bP)$ be a stochastic
basis with usual conditions. On $\F$, we define a standard scalar Wiener
process $W = \{w(t)\}_{t\ge 0}$. For simplicity, we assume that the
filtration 
$\{\cF_t\}_{t\ge0}$ is generated by $W$.

Given a Hilbert space $H$, we denote by $L^2_\cF(0,T;H)$ the Banach
space consisting of all $H$-valued $\{\cF_t\}_{t\ge0}$-adapted
processes $X$ such that the square of the canonical norm  $\E\|X(\cdot)\|^2_{L^2(0,T;H)}
<\infty$; denote by
$L^\infty_\cF(0,T;H)$ the Banach space consisting of all $H$-valued
$\{\cF_t\}_{t\ge0}$-adapted bounded processes, with the essential
supremum norm; and denote by $L^2_\cF(\Omega;C([0,T];H))$ the Banach
space consisting of all $H$-valued $\{\cF_t\}_{t\ge0}$-adapted continuous
processes $X$ such that the square of the canonical norm $\E
\|X(\cdot)\|^2_{C(0,T;H)}<\infty$. For any $t\in[0,T]$, the space
$L^2(\Omega,\cF_t,\bP;H)$ consists of all $H$-valued
$\cF_t$-measurable random variables with finite second moments.

Let $A$ be an unbounded linear operator on $L^2(D)$:
\[\mathcal{D}(A)=H^2(D)\cap H^1_0(D),\quad Av = \Delta v,\ \forall v\in
\mathcal{D}(A).\]

The goal of this paper is to derive directly the observability
inequality for the following backward stochastic heat equation
\begin{equation}
  \label{eq:adjoint}
  \begin{cases}
dz = - A z dt - a(t) Z dt + Z dw(t),
& t\in (0,T),\\
z(T) = \eta,
\end{cases}
\end{equation}
where $a\in L^\infty_\cF(0,T;\R)$. For each $\eta\in
L^2(\Omega,\cF_T,\bP;L^2(D))$, it is known (see for example
\cite{Hu1991,Du2010}) that the equation \eqref{eq:adjoint} admits a
unique solution $(z,Z)$ in the space of
$(L^2_\cF(\Omega;C(0,T;L^2(D)))\cap L^2_\cF(0,T;H^1_0(D)))\times L^2_\cF(0,T;L^2(D))$.

The following is our main theorem.

\begin{theorem}
  \label{thm:obs-inq}
Let $D$ be a bounded domain in $\R^d$ with a $C^2$ boundary. Let
  $x_0\in D$ and $R\in (0,1]$ such that $B_{4R}(x_0)\subseteq
  D$. Suppose $G$ is a subset of $D$ with positive measure, contained
  in $B_R(x_0)$, and $E$ is measurable subset of $[0,T]$ with positive
  measure. Then there exists a constant $C=C(T,D,R,|G|, |E|)$ such that the
  following observability inequality holds: for any $\eta\in L^2(\Omega,\cF_T,\bP;L^2(D))$,
\begin{equation}
  \label{eq:obs-inq}
  \E \|z(0;T,\eta)\|^2 \le C \left(\int_E
    \left(\E\|z(t;T,\eta)\|^2_G\right)^{1/2}dt \right)^2.
\end{equation}
\end{theorem}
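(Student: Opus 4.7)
The plan is to combine a two-point parabolic interpolation estimate of Phung--Wang / Apraiz--Escauriaza--Montaner--Wang type for the backward stochastic heat equation \eqref{eq:adjoint} with a telescoping series argument anchored at a Lebesgue density point of $E$. In the deterministic setting this scheme is classical, but here it must be carried out directly in the adapted framework: as the authors emphasize, the time-change trick that would reduce the backward equation to a forward one is unavailable once the martingale term $Z\,dw$ is present. Throughout, I would keep \eqref{eq:adjoint} as the central object and treat $Z$ as an unknown whose contribution to any weighted energy identity must be absorbed.

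The first and hardest step is to prove that there exist $\theta\in(0,1)$ and constants $C,M>0$ (depending on $D$, $R$, $|G|$, and the $L^\infty_\cF$-bound of $a$) such that for every $0\le t_1<t_2\le T$,
\[
\E\|z(t_1)\|^2 \;\le\; e^{M/(t_2-t_1)}\bigl(\E\|z(t_2)\|_G^2\bigr)^{\theta}\bigl(\E\|z(t_2)\|^2\bigr)^{1-\theta}.
\]
I would derive this from a global Carleman inequality for \eqref{eq:adjoint} on $(t_1,t_2)\times D$, with a Fursikov--Imanuvilov weight $\varphi(t,x)$ that is singular at $t=t_1,t_2$ and whose spatial part peaks on $B_R(x_0)\supseteq G$. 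Applying It\^o's formula to $e^{s\varphi}z$ introduces extra terms involving $Z$; the key is that, after taking expectation, those terms together with the potential contribution $a(t)Z$ can be absorbed into the leading weighted terms by choosing the Carleman parameter $s$ large. The main obstacle of the whole argument is exactly here: obtaining the single-observation-set interpolation with explicit exponential dependence $e^{M/(t_2-t_1)}$ while respecting adaptedness and controlling the unknown diffusion $Z$ against a weight that is singular at both time endpoints.

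Given the interpolation, the remainder is standard. A direct It\^o computation on $\|z\|^2$, combined with $2|a(t)(z,Z)|\le \|a\|_\infty^2\|z\|^2+\|Z\|^2$ and Gr\"onwall, yields the backward energy bound $\E\|z(t)\|^2\le e^{C(s-t)}\,\E\|z(s)\|^2$ for $0\le t\le s\le T$, which lets one propagate any interior bound back to $t=0$. Next, I would fix a Lebesgue density point $t_0\in(0,T)$ of $E$ and a density coefficient $\ell\in(0,1)$ close enough to $1$ that the iteration closes for the given $\theta$, and construct a geometric sequence $t_m\downarrow t_0$ with $|E\cap(t_{m+1},t_m)|\ge \ell(t_m-t_{m+1})$. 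Applying the interpolation between $t_1=t_{m+2}$ and any $t_2\in E\cap(t_{m+1},t_m)$, decoupling via Young's inequality $ab\le\varepsilon a^{1/\theta}+C_\varepsilon b^{1/(1-\theta)}$, integrating in $t_2$ over $E\cap(t_{m+1},t_m)$, and summing the resulting telescoping recursion produces
\[
\E\|z(t_0)\|^2 \;\le\; C\Bigl(\int_E \bigl(\E\|z(t)\|_G^2\bigr)^{1/2}\,dt\Bigr)^2.
\]
Propagating this estimate from $t_0$ back to $t=0$ by the energy bound yields \eqref{eq:obs-inq}, with $C$ depending on $T$, $D$, $R$, $|G|$, and $|E|$ through $t_0$, $\ell$, and the Carleman constants.
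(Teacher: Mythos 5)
Your overall architecture---a quantitative interpolation estimate combined with a telescoping sum anchored at a Lebesgue density point of $E$---is exactly the paper's, and the auxiliary steps (the Gr\"onwall energy bound $\E\|z(t)\|^2\le e^{\tau(s-t)}\E\|z(s)\|^2$ for $t\le s$, the Young-inequality decoupling, the geometric choice of the time sequence) are sound. But the central lemma you propose is false as stated. In
\[
\E\|z(t_1)\|^2 \;\le\; e^{M/(t_2-t_1)}\bigl(\E\|z(t_2)\|_G^2\bigr)^{\theta}\bigl(\E\|z(t_2)\|^2\bigr)^{1-\theta},
\]
the localized observation sits at the \emph{later} time $t_2$, i.e.\ at the endpoint closer to the terminal data, where the solution has not yet been regularized by the backward flow. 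Take $a\equiv 0$ and $t_2=T$: any $\eta$ supported in $D\setminus \overline{B_R(x_0)}$ makes the right-hand side vanish while $z(t_1)=e^{(T-t_1)\Delta}\eta\neq 0$; for $t_2<T$ one reaches the same contradiction by letting $z(t_2)$ approximate such a function in $L^2(D)$. The correct form---the one the paper proves---places the observation at the same (earlier) time as the quantity being estimated and the global norm at the later reference time,
\[
\E\|z(t)\|^2 \;\le\; K e^{K/(T'-t)}\bigl(\E\|z(t)\|_G^2\bigr)^{1/2}\bigl(\E\|z(T')\|^2\bigr)^{1/2},\qquad t<T',
\]
reflecting that the backward equation smooths as $t$ decreases away from the terminal time. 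This is not a cosmetic slip: the telescoping recursion has to be organized around the corrected inequality (observe at $t\in(\ell_m,\tau_m)$ with reference time $\ell_{m+1}$ and $\ell_{m+1}-t$ bounded below on each block), not around yours.

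Separately, even for the corrected inequality, your proposed derivation---a global Carleman estimate for the backward stochastic equation with a weight singular at both time endpoints, absorbing the unknown diffusion $Z$---is precisely the step the paper identifies (citing \cite{Viorel2003}) as very difficult, and you defer it as ``the main obstacle'' without an actual argument. The paper avoids it entirely: it expands $z$ in the Dirichlet eigenbasis, applies the purely deterministic spectral inequality of \cite{Apraiz2014} to the low-frequency projection $\cE_\lambda z(t)$, proves by a short It\^o computation the decay $\E\|\cE_\lambda^{\perp}z(t)\|^2\le e^{(-2\lambda+\tau)(T-t)}\E\|\eta\|^2$ for the high-frequency part, and then optimizes over $\lambda$ and an auxiliary parameter $\e$. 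To obtain a complete proof you should replace the Carleman step by this spectral splitting; as written, the proposal both rests on a false inequality and leaves its hardest step unproved.
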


As a result, we obtain the null controllability for a class of forward
stochastic heat equations:
\begin{equation}
  \label{eq:main}
  \begin{cases}
d y = A ydt + \chi_E
\chi_G u(t) dt + a(t) y dw(t),
& t\in (0,T),\\
y(0) = y_0.
\end{cases}
\end{equation}

\begin{theorem}
  \label{thm:main}
The equation \eqref{eq:main} is $L^\infty$-null controllable. That is,
for each initial data $y_0\in L^2(\Omega, \cF_0, \bP;L^2(D))$, there
is a control $u$ in the space $L^\infty_\cF (0,T;L^2(D))$ such that
the solution $y$ of the equation \eqref{eq:main} satisfies $y_T = 0$
in $D$, $\bP$-a.s. Moreover, the control $u$ satisfies the following
estimate
\begin{equation}
  \label{eq:control est.}
  \E\|u\|^2_{L^\infty (0,T;L^2(D))} \le C \E \|y_0\|^2.
\end{equation}
\end{theorem}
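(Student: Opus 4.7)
The plan is to derive null controllability from the observability inequality \eqref{eq:obs-inq} by the Hilbert Uniqueness Method. Since the right-hand side of \eqref{eq:obs-inq} is the square of an $L^1_t$-type norm on $E$, whose dual is an $L^\infty_t$-norm, the minimum-norm construction naturally produces the control in the desired space $L^\infty_\cF(0,T;L^2(D))$.

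On $H:=L^2(\Omega,\cF_T,\bP;L^2(D))$ I would consider the convex continuous functional
\[
J(\eta) = \tfrac12\,\Phi(\eta)^2 + \E\bigl(y_0,\,z(0;T,\eta)\bigr), \qquad \Phi(\eta) := \int_E \bigl(\E\|z(t;T,\eta)\|_G^2\bigr)^{1/2}\,dt,
\]
where $(z,Z)$ solves \eqref{eq:adjoint}. Observability \eqref{eq:obs-inq} combined with Cauchy-Schwarz gives $|\E(y_0,z(0;T,\eta))| \le C^{1/2}(\E\|y_0\|^2)^{1/2}\,\Phi(\eta)$, so $J$ is coercive in $\Phi$. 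Either a Tikhonov regularization $J_\varepsilon(\eta)=J(\eta)+\tfrac{\varepsilon}{2}\E\|\eta\|^2$ (minimized on $H$ and then $\varepsilon\to 0^+$), or a passage to the completion of $H/\ker\Phi$ under $\Phi$, produces a limit element $\hat\eta$ whose associated $(\hat z,\hat Z)$ satisfies the Euler-Lagrange identity
\[
\Phi(\hat\eta)\int_E \frac{\E(\hat z(t),\,z(t;T,\eta))_G}{(\E\|\hat z(t)\|_G^2)^{1/2}}\,dt \;=\; -\,\E\bigl(y_0,\,z(0;T,\eta)\bigr), \qquad \forall\eta\in H.
\]
This motivates the candidate control
\[
u(t,x,\omega) := \Phi(\hat\eta)\,\chi_E(t)\,\chi_G(x)\,\frac{\hat z(t,x,\omega)}{(\E\|\hat z(t)\|_G^2)^{1/2}},
\]
which is $\{\cF_t\}$-adapted since the normalization is deterministic, satisfies $(\E\|u(t)\|^2)^{1/2}\equiv\Phi(\hat\eta)$ a.e.\ on $E$, and therefore lies in $L^\infty_\cF(0,T;L^2(D))$ with norm $\Phi(\hat\eta)$.

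To verify $y(T;y_0,u)=0$, I would apply Itô's formula to $(y(t),z(t;T,\eta))$ for the solution $y$ of \eqref{eq:main} with this $u$. The terms $(Ay,z)-(y,Az)$ cancel by self-adjointness of $A$ under Dirichlet conditions, and the Itô cross term $ayZ$ cancels against the drift $-aZ$ in \eqref{eq:adjoint}, yielding after taking expectations the duality identity
\[
\E(y(T),\eta) - \E(y_0,z(0;T,\eta)) \;=\; \int_E \E(u(t),z(t;T,\eta))_G\,dt, \qquad \forall\eta\in H.
\]
Substituting the definition of $u$ and invoking the Euler-Lagrange identity reduces the right-hand side to $-\E(y_0,z(0;T,\eta))$, so $\E(y(T),\eta)=0$ for every $\eta$; hence $y(T)=0$, $\bP$-a.s. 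The estimate \eqref{eq:control est.} follows from $J(\hat\eta)\le J(0)=0$, which together with Cauchy-Schwarz and \eqref{eq:obs-inq} yields $\Phi(\hat\eta)^2 \le 4C\,\E\|y_0\|^2$.

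The principal obstacle lies in the variational existence step: coercivity is only with respect to the seminorm $\Phi$ and not the Hilbert norm of $H$, so no direct weak compactness is available on $H$. This is why either Tikhonov regularization or an abstract completion of $H/\ker\Phi$ (on which $\eta\mapsto z(0;T,\eta)$ extends continuously by \eqref{eq:obs-inq}) is needed to produce $\hat\eta$. A secondary technical point is making sense of the ratio $\hat z/(\E\|\hat z(t)\|_G^2)^{1/2}$ defining $u$: one must verify $\E\|\hat z(t)\|_G^2>0$ for a.e.\ $t\in E$, which fails only in the trivial case $\hat\eta=0$, where the Euler-Lagrange relation forces $y_0=0$ and the choice $u\equiv 0$ suffices.
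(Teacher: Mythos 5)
Your overall strategy is, in substance, the one the paper relies on: the paper proves Theorem \ref{thm:main} by combining Theorem \ref{thm:obs-inq} with the duality statement of Proposition \ref{prop:equiv}, whose proof it omits and outsources to \cite[Proposition 1.1]{Liu2014}. What you do is carry out that duality explicitly by the variational (HUM) construction, correctly exploiting the $L^1$-in-time structure of the right-hand side of \eqref{eq:obs-inq} to land in an $L^\infty$-in-time control space; your It\^{o}-formula duality identity and the bound $\Phi(\hat\eta)^2\le 4C\,\E\|y_0\|^2$ from $J(\hat\eta)\le J(0)=0$ are both correct. So the proposal is a legitimate, more self-contained route to the same theorem, and it supplies details the paper leaves to a citation.

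Two points need repair before the argument is complete. First, the norm. Your control satisfies $\operatorname{ess\,sup}_{t}\big(\E\|u(t)\|^2\big)^{1/2}=\Phi(\hat\eta)$, i.e.\ it is bounded in $L^\infty(0,T;L^2(\Omega\times D))$, which is the exact dual of the $L^1$-in-time quantity in \eqref{eq:obs-inq}. But \eqref{eq:control est.} as written, $\E\|u\|^2_{L^\infty(0,T;L^2(D))}=\E\big[\operatorname{ess\,sup}_t\|u(t,\cdot,\omega)\|^2\big]$, puts the expectation \emph{outside} the supremum in $t$, and your construction does not obviously control $\E\big[\operatorname{ess\,sup}_{t\in E}\,\|\hat z(t)\|_G^2/\E\|\hat z(t)\|_G^2\big]$. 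You must either justify that the intended norm is $\operatorname{ess\,sup}_t(\E\|\cdot\|^2)^{1/2}$ (which is what the duality naturally yields) or add an argument; as it stands the claim ``therefore lies in $L^\infty_\cF(0,T;L^2(D))$ with norm $\Phi(\hat\eta)$'' conflates two different norms. Second, your Euler--Lagrange identity presupposes that $\Phi$ is differentiable at $\hat\eta$, which fails at times $t\in E$ where $\E\|\hat z(t)\|_G^2=0$; the correct statement is the subdifferential inclusion $0\in\partial J(\hat\eta)$, which yields a multiplier $\rho$ with $(\E\|\rho(t)\|_G^2)^{1/2}\le 1$ and $u=\Phi(\hat\eta)\chi_E\chi_G\rho$ in place of your explicit ratio --- the standard fix in the deterministic setting of \cite{PW13,Apraiz2014}. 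Setting $u=0$ where the denominator vanishes repairs the formula for the control but not the derivation of the identity itself. The remaining issue you flag --- realizing the minimizer over the completion of $H/\ker\Phi$ as a genuine adapted solution of \eqref{eq:adjoint} --- is real, and is precisely the kind of identification the paper performs (in the $L^2$ setting) in Lemma \ref{lem:X-beta}; either that argument or your Tikhonov regularization closes it.
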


The rest of the paper is organized as follows. In Section
\ref{sec:obv-null}, we prove our main theorems. In Section
\ref{sec:relaxed}, we discuss the relaxed optimal actuator location
problem. More specifically, we state and formulate the problem in
Section \ref{subsec:formulation}. In Section 3.2, we show the
existence of the optimal minimal norm control. In Section
\ref{ss:relaxed optimal}, the existence of relaxed optimal actuator
location is proved. Finally, Section \ref{sec:char-via-nash} provides
the characterization of the solution of the relaxed optimal actuator
location problem by a Nash equilibrium. For completeness, we include
some basics of two person zero sum game in Appendix.

\section{Observability Inequality and Null Controllability}
\label{sec:obv-null}
In this section, we will prove our main therorem and provide the
observability inequality \eqref{eq:obs-inq}. By duality, the
equivalence between the null controllability of the equation
\eqref{eq:main} and the observability estimate for the adjoint
equation \eqref{eq:adjoint} is obtained. As a result, we obtain Theorem \ref{thm:main}.

Let us start with some notations. we write 
\[
0<\lambda_1\le \lambda_2\le \cdots
\]
for the eigenvalues of $-\Delta$ with the zero Dirichlet boundary
condition over $\partial D$, and $\{e_j\}_{j\ge 1}$ for the
orthonormal basis for $L^2(D)$. For each $\lambda>0$, we define
\[
\cE_\lambda f = \sum_{\lambda_j\le\lambda} (f,e_j)e_j,~\text{and}~
\cE^\perp_\lambda f = \sum_{\lambda_j>\lambda} (f,e_j)e_j.
\]
 
Now recall an important spectral inequality used later in this paper; see Theorem 5 in \cite{Apraiz2014}.
\begin{lemma}
  \label{lem:spectral}
  Let $D$ be a bounded domain in $\R^d$ with a $C^2$ boundary. Let
  $x_0\in D$ and $R\in (0,1]$ such that $B_{4R}(x_0)\subseteq
  D$. Suppose $G$ is a subset of $D$ with positive measure, contained
  in $B_R(x_0)$. Then there exists a positive constant $N=N(D,R,|G|)$ such that
\begin{equation}
  \label{eq:spectral}
\|\cE_\lambda \eta\|^2
\leq N\exp\big( N\sqrt{\lambda}\big)\|\cE_\lambda \eta \|_G^2,\  \forall \eta\in
L^2(D), \ \lambda>0.
\end{equation}
\end{lemma}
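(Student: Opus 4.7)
The plan is to prove Lemma \ref{lem:spectral} by a Lebeau--Robbiano type strategy: split \eqref{eq:spectral} into a ``measurable-set-to-ball'' estimate followed by the classical ``ball-to-whole-domain'' spectral inequality. Concretely, I would first locate a ball $B_{r_0}(\bar x)\subset B_R(x_0)$ and prove
\[
\|\cE_\lambda\eta\|_{L^2(B_{r_0}(\bar x))}^2\le N_1\exp\!\big(N_1\sqrt{\lambda}\big)\|\cE_\lambda\eta\|_G^2,
\]
then chain this with the classical Lebeau--Robbiano spectral inequality for an open ball
\[
\|\cE_\lambda\eta\|^2\le N_2\exp\!\big(N_2\sqrt{\lambda}\big)\|\cE_\lambda\eta\|_{L^2(B_{r_0}(\bar x))}^2,
\]
which is itself proved via a global Carleman estimate for the augmented elliptic operator $-\Delta_x-\partial_s^2$ on $D\times(-1,1)$ with a weight vanishing at $s=0$.

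For the first (measurable-set) step, set $u=\cE_\lambda\eta$ and introduce the harmonic lift
\[
U(x,s)=\sum_{\lambda_j\le\lambda}(\eta,e_j)\,e_j(x)\,\frac{\sinh(\sqrt{\lambda_j}\,s)}{\sqrt{\lambda_j}},\qquad (x,s)\in D\times\R,
\]
which satisfies $\Delta_x U+\partial_s^2 U=0$ on $D\times\R$, vanishes on $\partial D\times\R$, and has $U(\cdot,0)=0$ with $\partial_s U(\cdot,0)=u$. A direct Parseval computation bounds $\|U(\cdot,s)\|$ by $|s|\exp(\sqrt{\lambda}|s|)\|u\|$, so the entire frequency content of $u$ is encoded as an exponential weight in the auxiliary variable $s$. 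Because $U$ is harmonic, the standard three-ball inequality for harmonic functions yields, for any ball with $B_{4r}(y,0)\subset D\times\R$ and some $\theta=\theta(d)\in(0,1)$ independent of $\lambda$,
\[
\|U\|_{L^2(B_{2r}(y,0))}\le C\,\|U\|_{L^2(B_r(y,0))}^{\theta}\,\|U\|_{L^2(B_{4r}(y,0))}^{1-\theta}.
\]

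Fix a Lebesgue density point $\bar x$ of $G$ and telescope the three-ball inequality along a geometric sequence of balls centered at $(\bar x,0)$ and shrinking toward it. Using $U(\cdot,0)=0$ and $\partial_s U(\cdot,0)=u$, one controls the $L^2$-norm of $U$ on a thin slab over $G\cap B_\rho(\bar x)$ by $\|u\|_G$ times a small $s$-factor; the Lebesgue density property $|G\cap B_\rho(\bar x)|\ge(1-\e_\rho)|B_\rho(\bar x)|$ with $\e_\rho\downarrow 0$ then lets one pass from $G$ to the full ball at that scale, and iterating back out to a macroscopic ball gives the first-step inequality. The main obstacle is exactly this iteration: every step of the three-ball chain introduces a multiplicative constant, and enlarging the $s$-interval in the lift costs a factor of $e^{\sqrt{\lambda}}$, so one must carefully tune the geometric contraction ratio, the density-deficit schedule, and the interpolation exponents so that the cumulative constant is no worse than $\exp(N\sqrt{\lambda})$. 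This bookkeeping is the technical heart of the argument in \cite{Apraiz2014}.
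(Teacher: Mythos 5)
You should first note that the paper contains no proof of Lemma \ref{lem:spectral} at all: it is quoted verbatim as Theorem 5 of \cite{Apraiz2014}, so your attempt has to be measured against the argument in that reference. Your scaffolding does match it in outline --- the harmonic lift $U(x,s)=\sum_{\lambda_j\le\lambda}(\eta,e_j)e_j(x)\sinh(\sqrt{\lambda_j}s)/\sqrt{\lambda_j}$, the reduction of the measurable set $G$ to a ball, the classical Lebeau--Robbiano inequality for balls, and a final absorption. But the step you propose for the heart of the matter, passing from $G$ to a ball at comparable scale, has a genuine gap.

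The gap is the slab estimate and the density-point telescoping. You claim that $U(\cdot,0)=0$, $\partial_sU(\cdot,0)=u:=\cE_\lambda\eta$ lets you bound the $L^2$-norm of $U$ on a thin slab over $G\cap B_\rho(\bar x)$ by $\|u\|_G$ times a small $s$-factor. This is false as stated: Taylor expansion gives $U(x,s)=s\,u(x)+\int_0^s(s-\sigma)\,\partial_\sigma^2U(x,\sigma)\,d\sigma$, and the remainder $\partial_\sigma^2U=\sum_{\lambda_j\le\lambda}(\eta,e_j)\sqrt{\lambda_j}\sinh(\sqrt{\lambda_j}\sigma)e_j$ is a \emph{different} combination of eigenfunctions, which can only be controlled through Parseval in $L^2(D)$ --- the $e_j$ are not orthogonal on a measurable set $G$, so no pointwise or $L^2(G)$ comparison with $u$ is available. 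The honest slab bound therefore carries a term proportional to $\lambda\, h^{2}e^{\sqrt{\lambda}h}\|u\|_{L^2(D)}$ relative to the main term, i.e.\ the very quantity you are trying to estimate. Forcing this term to be absorbable requires a $\lambda$-dependent slab thickness $h\lesssim\lambda^{-1/2}$; the three-ball chain must then start from scale $h$, its cumulative H\"older exponent $\gamma$ decays like a power of $h$ (a negative power of $\lambda$), and the final absorption $A\le(e^{C\sqrt{\lambda}}A)^{1-\gamma}B^{\gamma}$ yields a constant $e^{C\sqrt{\lambda}/\gamma}$, which grows like $\exp(C\lambda^{\kappa})$ with $\kappa>1/2$ --- not $\exp(N\sqrt{\lambda})$. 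More fundamentally, three-ball inequalities propagate smallness between \emph{open} balls; no tuning of ratios and density deficits makes them transfer information from a set of positive measure to a ball at the same scale. What \cite{Apraiz2014} actually uses at this point is quantitative analyticity plus a Remez--Vessella--Nadirashvili propagation-of-smallness estimate: the lift gives $\|\partial^\alpha u\|_{L^\infty(B_{2R}(x_0))}\le Ce^{C\sqrt{\lambda}}\rho^{-|\alpha|}|\alpha|!\,\|u\|$ with analyticity radius $\rho$ \emph{uniform in} $\lambda$ (only the amplitude grows), and for such functions one has $\|u\|_{L^\infty(B_R(x_0))}\le C\big(e^{C\sqrt{\lambda}}\|u\|\big)^{1-\gamma}\|u\|_{L^1(G)}^{\gamma}$ with $\gamma$ depending only on $|G|$, $\rho$, $d$; it is this uniformity of $\gamma$ in $\lambda$ that preserves the exponent $e^{N\sqrt{\lambda}}$ after combining with the ball spectral inequality and absorbing. (Telescoping along Lebesgue density points is indeed used in this paper, but for the measurable set $E$ in the \emph{time} variable, in the proof of Theorem \ref{thm:obs-inq}; it is not a substitute for the spatial Remez-type ingredient here.) Without that ingredient your argument cannot close.
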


Set $\tau = \|a\|^2_{L^\infty_\cF(0,T;\R)}$.

Let us denote by $z(\cdot;T,\eta)$ the solution of equation
\eqref{eq:adjoint} given the terminal condition $\eta=z(T)$. By
linearity, it is easy to check that
\begin{align}
  & z(t; T,\cE_\lambda \eta) = \sum_{\lambda_j\le \lambda}
  z_j(t;T,\eta_j)e_j = \cE_\lambda z(t;T,\eta);\label{eq:1}\\
& z(t; T,\cE^\bot_\lambda \eta) = \sum_{\lambda_j> \lambda}
  z_j(t;T, \eta_j)e_j = \cE^\bot_\lambda z(t;T,\eta), \label{eq:2}
\end{align}
where $\eta_j=(\eta,e_j)$ and $(z_j(\cdot;T,\eta_j), Z_j(\cdot;T,\eta_j))$ is the solution of the following backward
stochastic differential equation
\begin{equation}\label{eq:bsde}
\begin{cases}
dz_j = \lambda_j z_j dt - a(t) Z_j dt + Z_j dW(t),
& t\in (0,T),\\
z_j(T) = \eta_j.
\end{cases}
\end{equation}

\begin{lemma}
      Given any $\eta$ in the space of  $L^2(\Omega,\cF_T,\bP;L^2(D))$,
    we have for each $t\in [0,T]$,
  \begin{equation}
    \label{eq:decay}
    \E \|z(t ;T,\cE_\lambda^\bot \eta)\|^2 \le e^{(-2\lambda+\tau)(T-t)} \E \|\eta\|^2
  \end{equation}
\end{lemma}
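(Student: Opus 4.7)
The plan is to exploit the eigenfunction expansion \eqref{eq:2} to reduce the estimate on $\E\|z(t;T,\cE_\lambda^\bot\eta)\|^2 = \sum_{\lambda_j>\lambda} \E|z_j(t;T,\eta_j)|^2$ to a collection of one-dimensional decay estimates on the components $z_j$ solving the BSDE \eqref{eq:bsde}. If one can show
\[
\E|z_j(t)|^2 \le e^{(-2\lambda_j+\tau)(T-t)} \E|\eta_j|^2, \qquad \forall j\ge 1,
\]
then summing over $\lambda_j>\lambda$ and bounding $e^{-2\lambda_j(T-t)} \le e^{-2\lambda(T-t)}$ inside the sum, together with Parseval, immediately yields \eqref{eq:decay}.

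To establish the componentwise decay, I would apply Itô's formula to $|z_j(t)|^2$ along \eqref{eq:bsde}. This gives
\[
d|z_j(t)|^2 = \bigl(2\lambda_j |z_j(t)|^2 - 2 a(t) z_j(t) Z_j(t) + |Z_j(t)|^2\bigr)\,dt + 2 z_j(t) Z_j(t)\, dw(t).
\]
After taking expectation (the stochastic integral is a martingale by the integrability properties built into the solution space), I obtain
\[
\frac{d}{dt} \E|z_j(t)|^2 = 2\lambda_j \E|z_j(t)|^2 - 2\E\bigl(a(t) z_j(t) Z_j(t)\bigr) + \E|Z_j(t)|^2.
\]

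The key manipulation is to absorb the cross term using Young's inequality $-2 a z_j Z_j \ge -a^2 |z_j|^2 - |Z_j|^2$, together with $|a(t)|^2 \le \tau$ a.e., which eliminates the $\E|Z_j|^2$ contribution and yields the differential inequality
\[
\frac{d}{dt}\E|z_j(t)|^2 \ge (2\lambda_j-\tau)\, \E|z_j(t)|^2.
\]
Integrating this from $t$ up to $T$ (i.e., running backward from the terminal datum $\E|z_j(T)|^2 = \E|\eta_j|^2$) gives the desired bound on each $\E|z_j(t)|^2$.

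The main obstacle is really just the careful handling of the cross term $-2a z_j Z_j$: one needs to pick the Young's inequality constant so that the unwanted $\E|Z_j|^2$ from the Itô correction is exactly cancelled, leaving a clean Grönwall-type inequality. Apart from that, the argument is a routine eigen-decomposition plus Itô computation; a minor technical point is that I should justify the termwise differentiation/integration and the vanishing of the expectation of the stochastic integral, both of which are guaranteed by $(z,Z)\in L^2_\cF(\Omega;C([0,T];L^2(D)))\times L^2_\cF(0,T;L^2(D))$.
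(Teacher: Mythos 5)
Your proof is correct and takes essentially the same route as the paper's: an It\^o/energy estimate on the squared norm in which the cross term $-2a\,z\,Z$ is absorbed via Young's inequality with the constant chosen exactly to cancel the $|Z|^2$ It\^o correction, leaving a Gr\"onwall-type inequality driven by $2\lambda_j-\tau$ and the bound $\lambda_j>\lambda$. The paper merely runs the same computation on the full $L^2(D)$-norm of $\cE^\bot_\lambda z$ with the exponential weight $e^{(2\lambda-\tau)(T-t)}$ built into the It\^o formula from the start, rather than componentwise followed by summation; this is only a cosmetic difference.
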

\begin{proof}
  Applying \Ito formula to
  $\exp[(2\lambda-\tau)(T-t)]\|z(t;T,\cE_\lambda^\bot)\|^2$, we obtain
\begin{align*}
  & \|\cE^\bot_\lambda \eta\|^2 - e^{(2\lambda-\tau)(T-t)}
    \|z(t;T,\cE_\lambda^\bot\eta)\|^2\\
= &\ \int_t^T e^{(2\lambda-\tau)(T-s)} \left[2(z, Az) -
    2a(s)(z, Z)\right] ds\\
& \qquad + \int_t^T e^{(2\lambda-\tau)(T-s)} \|Z\|^2 ds + \int_t^T e^{(2\lambda-\tau)(T-s)}
    2(z, Z) dW(s)\\
&\qquad  - \int_t^T e^{(2\lambda-\tau)(T-s)}
    (2\lambda-\tau) \|z(s;T,\cE_\lambda^\bot\eta)\|^2 ds.
\end{align*}
Taking the expectation, it follows from equality \eqref{eq:2} that
\begin{align*}
  & \E \|\cE^\bot_\lambda \eta\|^2 - e^{(2\lambda-\tau)(T-t)}
    \E \|z(t;T,\cE_\lambda^\bot \eta)\|^2\\
= &\ \E \int_t^T e^{(2\lambda-\tau)(T-s)} \Big( 2
  \sum_{\lambda_j>\lambda} \lambda_j (z^j)^2 -2a(s)(z,Z)\\
&\qquad \left.+ \|Z\|^2- (2\lambda-\tau) \|z(s;T,\cE_\lambda^\bot\eta)\|^2\right) dt\\
\ge & \ \E \int_t^T e^{(2\lambda-\tau)(T-s)} \Big(-2a(s)(z, Z)
  \\
& \qquad + \left.
  \|Z\|^2 + \tau \|z(s;T,\cE_\lambda^\bot\eta)\|^2\right) dt\\
\ge & \ \E \int_t^T e^{(2\lambda-\tau)(T-s)}
\Big((\|Z\|-|a(s)|\|z(s;T,\cE_\lambda^\bot\eta)\|)^2\\
& \qquad + (\tau-|a(s)|^2)\|z(s;T,\cE_\lambda^\bot\eta)\|^2\Big) dt\\
\ge &\ 0,
\end{align*}
which implies the inequality \eqref{eq:decay}.
\end{proof}

Next, we provide an interpolation inequality.

\begin{proposition}
  For $\eta\in L^2(\Omega,\cF_T,\bP;L^2(D))$, and $t\in [0,T)$, there
  exists
 a constant $K = K(T,D,R,|G|)$ such that
\begin{equation}
  \label{eq:interpolation}
  \E \|z(t;T,\eta)\|^2 \le K\exp(K(T-t)^{-1}) (\E \|z(t;T,\eta)\|_G^2)^{1/2}(\E \|\eta\|^2)^{1/2}.
\end{equation}
\end{proposition}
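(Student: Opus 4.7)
The plan is to combine a frequency-splitting inequality with the exponential decay of high-frequency modes already proved in \eqref{eq:decay}, and then optimize over the cutoff frequency $\lambda$. This is the Lebeau--Robbiano mechanism adapted to the stochastic setting, where the spectral inequality Lemma \ref{lem:spectral} replaces its deterministic counterpart.

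First, for any $\lambda>0$, I will decompose $z(t;T,\eta)=\cE_\lambda z(t;T,\eta)+\cE^\bot_\lambda z(t;T,\eta)$. Applying Lemma \ref{lem:spectral} to $\cE_\lambda z(t;T,\eta)(\omega,\cdot)$ pathwise and then taking expectation gives
\[
\E\|\cE_\lambda z(t;T,\eta)\|^2 \le Ne^{N\sqrt{\lambda}}\,\E\|\cE_\lambda z(t;T,\eta)\|_G^2.
\]
Writing $\cE_\lambda z = z-\cE^\bot_\lambda z$ and bounding $\|\cdot\|_G\le\|\cdot\|$, this yields
\[
\E\|\cE_\lambda z(t;T,\eta)\|^2 \le 2Ne^{N\sqrt{\lambda}}\bigl(\E\|z(t;T,\eta)\|_G^2+\E\|\cE^\bot_\lambda z(t;T,\eta)\|^2\bigr).
\]
By the identity \eqref{eq:2}, the high-frequency part equals $z(t;T,\cE^\bot_\lambda \eta)$, so \eqref{eq:decay} applies and gives $\E\|\cE^\bot_\lambda z(t;T,\eta)\|^2\le e^{(-2\lambda+\tau)(T-t)}\E\|\eta\|^2$. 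Adding the two pieces, I obtain, for every $\lambda>0$,
\[
\E\|z(t;T,\eta)\|^2 \le C_1 e^{C_1\sqrt{\lambda}}\,\E\|z(t;T,\eta)\|_G^2 + C_1 e^{C_1\sqrt{\lambda}-2\lambda(T-t)+\tau T}\,\E\|\eta\|^2,
\]
with $C_1=C_1(D,R,|G|)$.

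Second, set $a=\E\|z(t;T,\eta)\|_G^2$ and $b=\E\|\eta\|^2$. If $a\ge b$, the case $\lambda=0$ of \eqref{eq:decay} already yields $\E\|z(t;T,\eta)\|^2\le e^{\tau T}b\le e^{\tau T}\sqrt{ab}$, which is even stronger than \eqref{eq:interpolation}. Otherwise I will choose $\lambda_*>0$ so that the two terms in the displayed bound are equal, namely by solving $C_1\sqrt{\lambda_*}+\lambda_*(T-t)=\log(b/a)+\tau T$ in $\sqrt{\lambda_*}$. Multiplying the two equal terms gives
\[
\bigl(C_1 e^{C_1\sqrt{\lambda_*}}a\bigr)^2 = C_1^2\, e^{2C_1\sqrt{\lambda_*}-2\lambda_*(T-t)+\tau T}\,ab,
\]
and the elementary inequality $C_1\sqrt{\lambda}-\lambda(T-t)\le C_1^2/(4(T-t))$ (obtained by maximizing in $\sqrt{\lambda}$) gives $\E\|z(t;T,\eta)\|^2 \le Ke^{K/(T-t)}\sqrt{ab}$ for some $K=K(T,D,R,|G|)$, which is \eqref{eq:interpolation}.

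\textbf{Main obstacle.} The only delicate point is ensuring the prefactor has the form $\exp(K/(T-t))$ rather than a worse scaling such as $\exp(K/(T-t)^2)$. This is precisely why one absorbs the spurious $C_1\sqrt{\lambda}$ factor into $\lambda(T-t)$ via the elementary quadratic optimization above, and then exploits the symmetry of the two-term bound to reach the geometric mean $\sqrt{ab}$ rather than a weighted arithmetic mean. The presence of the stochastic cross term $a(t)Z$ in \eqref{eq:adjoint} enters only through $\tau$, and has already been accommodated by completing the square in the proof of \eqref{eq:decay}, so no further stochastic work is needed at this stage.
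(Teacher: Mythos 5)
Your proposal is correct and follows essentially the same route as the paper: spectral inequality on the low modes, the decay estimate \eqref{eq:decay} on the high modes via \eqref{eq:2}, and the elementary bound $N\sqrt{\lambda}-\lambda(T-t)\le N^2/(4(T-t))$. The only (immaterial) difference is the final optimization: you balance the two terms directly in $\lambda$ with a separate case for $a\ge b$, whereas the paper substitutes $\e=e^{-\lambda(T-t)}$, extends the resulting inequality to all $\e>0$ using the energy estimate, and minimizes over $\e$.
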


\begin{proof}
  Set $z = z(\cdot;T,\eta)$, then it follows from the
  spectral estimate
  \eqref{eq:spectral} that
\begin{align*}
  \E \|\cE_\lambda z(t)\|^2 
& \le N \exp(N\sqrt{\lambda})\E \|\cE_\lambda z(t)\|^2_G\\
& \le N \exp(N\sqrt{\lambda})\big(\E \|z(t)\|^2_G +
\E\|\cE^\bot_\lambda z(t)\|^2_G\big)
\end{align*}
for some constant $N = N(D,R,|G|)$. Therefore, by the decay estimate
\eqref{eq:decay} we obtain that
\begin{align*}
&  \E \|z(t)\|^2 
 = \E \|\cE_\lambda z(t)\|^2 + \E \|\cE^\bot_\lambda
z(t)\|^2\\
& \le  N \exp(N\sqrt{\lambda})\big(\E \| z(t)\|^2_G + \E
\|\cE^\bot_\lambda z(t)\|^2_G\big)+ \E
\|\cE^\bot_\lambda z(t)\|^2\\
& \le 2N \exp(N\sqrt{\lambda})\big(\E \| z(t)\|^2_G + \E
\|\cE^\bot_\lambda z(t)\|^2\big)\\
&\le 2N \exp(N\sqrt{\lambda})\big(\E \| z(t)\|^2_G +
e^{(-2\lambda+\tau)(T-t)} \E \|\eta\|^2\big)\\
&\le  2N e^{\tau T}\exp(N\sqrt{\lambda})\big(\E \| z(t)\|^2_G +
e^{(-2\lambda(T-t))} \E \|\eta\|^2\big)\\
& = 2N e^{\tau T} \exp(N\sqrt{\lambda}-\lambda(T-t))\big(e^{\lambda(T-t)}\E \| z(t)\|^2_G +
e^{-\lambda(T-t)} \E \|\eta\|^2\big).
\end{align*}
It is easy to verify that for all $\lambda>0$, 
\[
 N\sqrt{\lambda} - \lambda(T-t) \le \frac{N^2}{4(T-t)}.
\]
Hence, there exists a constant $K=K(T,D,R,|G|)$ such that 
\[
 \E \|z(t)\|^2 \le K \exp(K(T-t)^{-1})\big[e^{\lambda(T-t)}\E \| z(t)\|^2_G +
e^{-\lambda(T-t)} \E \|\eta\|^2\big],
\]
which is equivalent to 
\begin{equation}
  \label{eq:epsilon inq}
  \E \|z(t)\|^2 \le K \exp(K(T-t)^{-1})\big[\e^{-1}\E \| z(t)\|^2_G +
\e \E \|\eta\|^2\big],\quad \forall \e\in (0,1).
\end{equation}
Noting that $\E \|z(t)\|^2 \le C \E \|\eta\|^2$, where $C$ is a
constant depending on $T$, we see that the inequality
\eqref{eq:epsilon inq} holds for all $\e>0$. Finally, minimizing
\eqref{eq:epsilon inq} with respect to $\e$ leads to the desired
estimate \eqref{eq:interpolation}.
\end{proof}

We are now ready to prove Theorem \ref{thm:obs-inq}
\begin{proof}[Proof of Theorem \ref{thm:obs-inq}]
  Let $\ell\in(0,T)$ be any Lebesgue point of $E$. Then for each constant $q\in(0,1)$ which is to be fixed later, there exists a monotone increasing sequence
$\{\ell_m\}_{m\geq1}$ in $(\ell,T)$ such that
$$\lim_{m\rightarrow +\infty}\ell_m=\ell,$$
\begin{equation}\label{eq:equiv ratio}
\ell_{m+2}-\ell_{m+1}=q(\ell_{m+1}-\ell_m),\;\forall m\geq1
\end{equation}
and
$$|E\cap (\ell_m,\ell_{m+1})|\geq \frac{\ell_{m+1}-\ell_m}{3},\;\forall m\geq1.$$
Set
$$\tau_m=\ell_{m+1}-\frac{\ell_{m+1}-\ell_m}{6},\;\forall m\geq1.$$
For each $t\in (\ell_m, \tau_m)$, by the interpolation inequality
\eqref{eq:interpolation}, we have
\[
\E \|z(t)\|^2 \le K\exp(K(\ell_{m+1}-t)^{-1}) (\E \|z(t)\|_G^2)^{1/2}(\E \|z(\ell_{m+1})\|^2)^{1/2}.
\]
Since
\[
\ell_{m+1} -t \ge \ell_{m+1} -\tau_m = \frac{\ell_{m+1} -\ell_m}{6},
\]
and for some constant $C=C(T)$, $\E \|z(\ell_m)\|^2 \le C \E
\|z(t)\|^2$, there exists a constant $C = C(T,D,R, |G|)$ such
that for all $m\ge 1$, and $t\in (\ell_m, \tau_m)$,
\[
\E \|z(\ell_m)\|^2 \le C e^{\frac{C}{\ell_{m+1}-\ell_m}} (\E \|z(t)\|_G^2)^{1/2}(\E \|z(\ell_{m+1})\|^2)^{1/2},
\]
which implies for each $\e>0$,
\[
\E \|z(\ell_m)\|^2 \le \e^{-1} C
e^{\frac{C}{\ell_{m+1}-\ell_{m}}} \E \|z(t)\|_G^2+ \e \E \|z(\ell_{m+1})\|^2,
\]
by the Cauchy inequality with $\e$. Equivalently, we have
\begin{equation}
  \label{eq:Am-inq}
  A_m \le \e^{-1} C
e^{\frac{C}{\ell_{m+1}-\ell_{m}}} B(t) + \e A_{m+1},
\end{equation}
where
\begin{equation}
  \label{eq:Am-B}
  A_m =\left(\E \|z(\ell_m)\|^2\right)^{1/2},\ B(t)=\left(\E \|z(t)\|^2\right)^{1/2}.
\end{equation}

Integrating the previous
inequality \eqref{eq:Am-inq} over $E\cap (\ell_m,\tau_m)$, and noting that
\begin{align*}
  |E\cap(\ell_m,\tau_m)|
& = |E \cap (\ell_m,\ell_{m+1})| - |E\cap (\tau_m,\ell_{m+1})|\\
& \ge \frac{\ell_{m+1}-\ell_{m}}{3} - \frac{\ell_{m+1}-\ell_{m}}{6}\\
& = \frac{\ell_{m+1}-\ell_{m}}{6},
\end{align*}
we have that for each $\e>0$
\[
A_m \le \e A_{m+1}+ \e^{-1} C
e^{\frac{C}{\ell_{m+1}-\ell_m}} \int_{\ell_{m}}^{\ell_{m+1} } \chi_E B(t) dt.
\]
Multiplying the above inequality by $\e \exp(-C/(\ell_{m+1}-\ell_m))$,
and replacing $\e$ by $\sqrt{\e}$ lead to
\[
\sqrt{\e} e^{-\frac{C}{\ell_{m+1}-\ell_m}} A_m \le \e
e^{-\frac{C}{\ell_{m+1}-\ell_m}} A_{m+1} + C \int_{\ell_{m}}^{\ell_{m+1} } \chi_E B(t) dt.
\]
 Finally choosing $\e =
\exp(-1/(\ell_m-\ell_{m+1}))$ in the above inequality, we get
\begin{align*}
  & e^{-\frac{C+1/2}{\ell_{m+1}-\ell_{m}}} A_m -
  e^{-\frac{C+1}{\ell_{m+1}-\ell_{m}}} A_{m+1}\le C \int_{\ell_{m}}^{\ell_{m+1} } \chi_E B(t) dt.
\end{align*}
Now, choosing $q = \frac{C+1/2}{C+1}$ in \eqref{eq:equiv ratio}, we
have
\begin{align*}
  & e^{-\frac{C+1/2}{\ell_{m+1}-\ell_{m}}} A_m -
  e^{-\frac{C+1/2}{\ell_{m+2}-\ell_{m+1}}} A_{m+1} \le C \int_{\ell_{m}}^{\ell_{m+1} } \chi_E B(t) dt.
\end{align*}
Summing the above inequality from $m=1$ to $+\infty$, we have
\[
A_1 \le C e^{\frac{C+1/2}{\ell_2-\ell_1}}
\int_\ell^{\ell_1} \chi_E B(t) dt.
\]
By the substitution \eqref{eq:Am-B}, we obtain
\[
\E \|z(\ell_1)\|^2 \le C e^{\frac{C+1}{\ell_2-\ell_1}}
\left(\int_\ell^{\ell_1} \chi_E (\E\|z(t)\|_G^2)^{1/2} dt\right)^2,
\]
which implies the observability inequality \eqref{eq:obs-inq},
completing the proof.
\end{proof}

Next, by the standard duality augment, we have the following
equivalence between the null controllability of the equation
\eqref{eq:main} and the observability inequality for the adjoint
equation \eqref{eq:adjoint}.
\begin{proposition}
  \label{prop:equiv}
  For any $T>0$, the equation \eqref{eq:main} is null controllable at
  time $T$ with the control $u$ in the space of
  $L^\infty_\cF(0,T;L^2(D))$ such that the estimate \eqref{eq:control
    est.} holds if and only if there exists $C>0$ such that the
  solution of the adjoint equation \eqref{eq:adjoint} satisfies the
  observability inequality \eqref{eq:obs-inq}.
\end{proposition}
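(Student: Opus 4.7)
My plan is to follow the classical duality argument between the forward system \eqref{eq:main} and its backward adjoint \eqref{eq:adjoint}, with an It\^o duality identity as the linchpin. For any $y_0\in L^2(\Omega,\cF_0,\bP;L^2(D))$, admissible control $u\in L^\infty_\cF(0,T;L^2(D))$, and terminal datum $\eta\in L^2(\Omega,\cF_T,\bP;L^2(D))$, let $y$ solve \eqref{eq:main} and $(z,Z)$ solve \eqref{eq:adjoint}. Applying It\^o's formula to $(y(t),z(t))$, the symmetric pair $(Ay,z)+(y,-Az)$ vanishes by self-adjointness of $A$ on $\mathcal{D}(A)$, while the adjoint drift $-a(t)Z\,dt$ exactly cancels the quadratic covariation contributed by the two noise terms $a(t)y\,dw$ and $Z\,dw$. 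Taking expectation yields the key identity
\[
\E(y(T),\eta)-\E(y_0,z(0;T,\eta))=\E\int_E(u(t),z(t;T,\eta))_G\,dt,
\]
on which both implications rest.

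For the direction ``null controllability $\Rightarrow$ observability,'' I would fix $\eta$, observe that $z(0;T,\eta)$ is $\cF_0$-measurable and therefore a.s.\ deterministic under the assumption that $\{\cF_t\}$ is generated by $W$, and apply the hypothesis to the initial datum $y_0:=z(0;T,\eta)$ to obtain a control $u$ driving $y(T)$ to zero with the bound \eqref{eq:control est.}. Substituting into the duality identity and using Cauchy--Schwarz in $\omega$ followed by the pointwise-in-$t$ bound $\|u(t)\|\le\|u\|_{L^\infty(0,T;L^2(D))}$ gives
\[
\E\|z(0;T,\eta)\|^2\le(\E\|u\|^2_{L^\infty(0,T;L^2(D))})^{1/2}\int_E(\E\|z(t;T,\eta)\|_G^2)^{1/2}\,dt;
\]
inserting \eqref{eq:control est.} and squaring produces precisely \eqref{eq:obs-inq}.

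For the converse, I would construct the control through a Hahn--Banach/Riesz argument. Let $\mathcal{X}$ denote the closed subspace of $\{\cF_t\}$-adapted processes in $L^1(E;L^2(\Omega;L^2(G)))$, equipped with the norm $\|v\|_{\mathcal{X}}:=\int_E(\E\|v(t)\|_G^2)^{1/2}\,dt$, and on the range of the linear map $\eta\mapsto z(\cdot;T,\eta)\chi_G$ define the functional $L(z(\cdot;T,\eta)\chi_G):=-\E(y_0,z(0;T,\eta))$. By \eqref{eq:obs-inq} one has $|L(z\chi_G)|\le\sqrt{C}(\E\|y_0\|^2)^{1/2}\|z\chi_G\|_{\mathcal{X}}$, so Hahn--Banach extends $L$ to all of $\mathcal{X}$ with the same norm. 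Identifying $\mathcal{X}^*$ with the corresponding adapted-$L^\infty$ space produces $u$ satisfying $\|u\|_{\mathcal{X}^*}\le\sqrt{C}(\E\|y_0\|^2)^{1/2}$ and $L(v)=\E\int_E(u,v)_G\,dt$. Feeding this $u$ back into the duality identity yields $\E(y(T),\eta)=0$ for every $\eta$, hence $y(T)=0$ $\bP$-a.s.

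The hard part will be the representation step: identifying $\mathcal{X}^*$ with the appropriate adapted-$L^\infty$ Bochner space, ensuring the extended functional is represented by an adapted process, and reconciling the dual norm with the essentially-bounded-in-$(t,\omega)$ norm $\|\cdot\|_{L^\infty_\cF(0,T;L^2(D))}$ appearing in \eqref{eq:control est.}. A cleaner alternative that bypasses these measurability subtleties is to obtain the control as the minimizer of the strictly convex functional $J(\eta):=\tfrac{1}{2}(\int_E(\E\|z(t;T,\eta)\|_G^2)^{1/2}dt)^2+\E(y_0,z(0;T,\eta))$ on $L^2(\Omega,\cF_T,\bP;L^2(D))$, whose coercivity follows directly from \eqref{eq:obs-inq} and whose first-order condition at the minimizer yields the control in explicit adapted form.
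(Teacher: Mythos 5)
The paper does not actually prove Proposition \ref{prop:equiv}: it omits the argument and points to \cite[Proposition 1.1]{Liu2014}, so your proposal is being measured against a citation rather than a written proof. What you propose is precisely the standard duality argument that citation stands for, and your It\^o identity $\E(y(T),\eta)-\E(y_0,z(0;T,\eta))=\E\int_E(u,z)_G\,dt$ is correct (the $(Ay,z)-(y,Az)$ cancellation and the cancellation of $-a(t)(y,Z)\,dt$ against the cross-variation both check out). The direction ``null controllability $\Rightarrow$ observability'' is complete as you wrote it: taking $y_0=z(0;T,\eta)$, which lies in $L^2(\Omega,\cF_0,\bP;L^2(D))$ (you do not need it to be deterministic), the H\"older pairing of the $L^\infty$-in-time control bound against the $L^1$-in-time observation norm gives \eqref{eq:obs-inq} after dividing by $(\E\|z(0)\|^2)^{1/2}$.

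The converse is where your argument is only a sketch, and the two difficulties you flag are genuine, not cosmetic. First, a Hahn--Banach extension on $\mathcal{X}$ need not produce an \emph{adapted} representative, so the extension route requires an extra projection argument. Second, the dual of the norm $\int_E(\E\|v(t)\|_G^2)^{1/2}dt$ is the mixed norm $\mathrm{ess\,sup}_t(\E\|u(t)\|^2_G)^{1/2}$, i.e.\ you get $\sup_t\E\|u(t)\|^2\le C\E\|y_0\|^2$, which is weaker than $\E\bigl[\sup_t\|u(t)\|^2\bigr]\le C\E\|y_0\|^2$; these do not coincide in the stochastic setting, so the estimate \eqref{eq:control est.} does not drop out of abstract duality. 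Your ``cleaner alternative'' is the right repair and is in fact the construction the paper itself uses in Section \ref{ss:optimal control} in the $L^2$ framework: minimizing $J(\eta)=\tfrac12\bigl(\int_E(\E\|z(t;T,\eta)\|_G^2)^{1/2}dt\bigr)^2+\E(y_0,z(0;T,\eta))$, whose coercivity is exactly \eqref{eq:obs-inq}, yields a minimizer $\eta^\ast$ and the adapted control
\[
u(t)=-\Bigl(\int_E\bigl(\E\|z^\ast(s)\|_G^2\bigr)^{1/2}ds\Bigr)\,\frac{\chi_E\chi_G\,z^\ast(t)}{\bigl(\E\|z^\ast(t)\|_G^2\bigr)^{1/2}},
\]
for which $\E\|u(t)\|^2$ is constant on $E$ and bounded by $C\E\|y_0\|^2$; adaptedness is automatic because $z^\ast$ is adapted. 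You should carry this computation out (Euler--Lagrange condition, then feed $u$ back into the duality identity to get $\E(y(T),\eta)=0$ for all $\eta$), and you should state explicitly in which sense the resulting $L^\infty$-in-time bound matches \eqref{eq:control est.}; as written, your proposal defers exactly the step that distinguishes the stochastic case from the deterministic one.
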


We omit the proof here, and refer the reader to, for example
\cite[Proposition 1.1]{Liu2014}. Then Theorem \ref{thm:main} is a direct
consequence of Theorem \ref{thm:obs-inq} and Proposition \ref{prop:equiv}.

\section{A Relaxed Optimal Actuator Location Problem}
\label{sec:relaxed}

\subsection{Problem formulation}
\label{subsec:formulation}

In the sequel, we assume $E=[0,T]$.

Now we consider the following norm optimal control problem
\begin{equation}
  \label{eq:norm optimal}
  N(G) = \inf \{ \E \|u\|^2_{L^2((0,T)\times D)} \mid y(T;G,u) = 0\
  \text{in}~ D, \bP\text{-a.s.} \},
\end{equation}
where $y(\cdot;G,u)$ is the solution of equation \eqref{eq:main}. In
the problem \eqref{eq:norm optimal}, we say $u$ is an {\it admissible
  control} if $u \in L^2_\cF(0,T;L^2(D))$ and $y(T;G,u)=0$ in $D$,
$\bP$-a.s.; we say $u^\ast$ is an {\it optimal minimal norm control} if $u^\ast$ is
an admissible control such that $N(G)$ is achieved. 
\begin{remark}
  \label{rem:7.20.1}
It is obvious that minimizing $\E\|u\|^2_{L^2((0,T)\times D)}$ is
equivalent to minimizing $\E\|u\|_{L^2((0,T)\times D)}$. Thus, the
problem we consider is a natural generalization of the usual norm optimal
control problem in the deterministic case.
\end{remark}

Given $\alpha\in (0,1)$, let
\begin{equation}
  \label{eq:W}
  \cW = \{G\subseteq D \mid G~\text{ is Lebesgue measurable with}~
  |G| = \alpha|D|\},
\end{equation}
where  $|\cdot|$ is the Lebesgue measure on
$\R^d$.

A classical optimal actuator location of the minimal norm control
problem is to seek a set $G^\ast\in \cW$ such that
\begin{equation}
  \label{eq:optimal location}
  N(G^\ast) = \inf_{G\in \cW} N(G).
\end{equation}
If such a $G^\ast$ exists, we say that $G^\ast$ is an {\it optimal
actuator location} of the optimal minimal norm controls. Any optimal
minimal norm
control $u^\ast$ satisfying
\[
\E \|u^\ast\chi_{G^\ast}\|^2_{L^2((0,T)\times D)} = N(G^\ast),
\]
is called an {\it optimal control} with respect to the optimal
actuator location $G^\ast$.

The existence of the optimal actuator location $G^\ast$ is generally
not guaranteed because of the absence of the compactness of $\cW$. For
this reason, we consider instead a relaxed problem. To this end, define
\begin{equation}
  \label{eq:B}
  \cB = \left\{\beta \in L^\infty(D;[0,1]) \mid \|\beta\|^2 = \alpha |D|\right\}.
\end{equation}
Note that the set $\cB$ is a relaxation of the set $\{\chi_G\mid G\in
\cW\}$.

For any $\beta\in \cB$, consider the following equation
\begin{equation}
  \label{eq:relaxed eqn}
  \begin{cases}
d y = A y dt + 
\beta u(t) dt + a(t) y dw(t),
& t\in (0,T),\\
y(0) = y_0.
\end{cases}
\end{equation}
We denote by $y(\cdot;\beta,u)$ the solution of equation
\eqref{eq:relaxed eqn}, and say the system \eqref{eq:relaxed eqn} {\it
  null controllable} if there exists $u\in L^2_\cF(0,T;L^2(D))$ such
that $y(T;\beta, u) =0$ in $D$, $\bP$-a.s. Accordingly, the problem
\eqref{eq:norm optimal} is replaced by 
\begin{equation}
  \label{eq:norm optimal relax}
  N(\beta) = \inf \{ \E \|u\|^2_{L^2((0,T)\times D)} \mid y(T;\beta,
  u) =0 ~\text{in}~D, \bP\text{-a.s.}\},
\end{equation}
and the classical optimal actuator location problem \eqref{eq:optimal
  location} is changed into the following relaxed problem
\begin{equation}
  \label{eq:optimal location relax}
  N(\beta^\ast) = \inf_{\beta\in\cB} N(\beta).
\end{equation}
Any solution $\beta^\ast$ to the problem \eqref{eq:optimal location
  relax} is called a {\it relaxed optimal actuator location} of the
optimal minimal norm controls.

Now we study the controllability of the relaxed system
\eqref{eq:relaxed eqn} with the same adjoint equation
\eqref{eq:adjoint}, and make sure that the set on the right hand side
of \eqref{eq:norm optimal relax} is not empty. In fact, the null
controllability is equivalent to the following observability
inequality, as we have done in the proof of Theorem \ref{thm:main}.
\begin{lemma}
  \label{lem:obs relax}
The system \eqref{eq:adjoint} is exactly observable, i.e., there
exists a constant $C>0$, independent of $\beta$, but possibly
depending on $\alpha$ such that for all $\eta\in L^2(\Omega, \cF_T,
\bP;L^2(D))$ and $\beta\in \cB$,
\begin{equation}
  \label{eq:obs relax}
  \E \|z(0;T,\eta)\|^2 \le C\int_0^T \E \|\beta z(t;T,\eta)\|^2 dt.
\end{equation}
\end{lemma}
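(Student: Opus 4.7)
The plan is to reduce Lemma \ref{lem:obs relax} to the already-proved Theorem \ref{thm:obs-inq} by extracting, from any $\beta\in\cB$, a ball $B_R(x_0)$ with $B_{4R}(x_0)\subseteq D$ and a measurable subset $G\subseteq B_R(x_0)$ on which $\beta$ is bounded below by a positive constant that depends only on $\alpha$ and $D$. Once such $G$ is produced, I will take $E=[0,T]$ in Theorem \ref{thm:obs-inq} and apply Cauchy--Schwarz to convert the $L^1$-in-time right-hand side of \eqref{eq:obs-inq} into the $L^2$-in-time right-hand side of \eqref{eq:obs relax}.

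For the construction of $G$, I would first observe that $0\le\beta\le 1$ a.e., so $\beta^2\le\beta$ and hence $\int_D\beta\,dx\ge\|\beta\|^2=\alpha|D|$. Next, fix $R=R(\alpha,D)\in(0,1]$ small enough that the inner open set $D_{4R}:=\{x\in D:\mathrm{dist}(x,\partial D)>4R\}$ satisfies $|D\setminus D_{4R}|<\alpha|D|/2$; this is possible since $|D_{4R}|\uparrow|D|$ as $R\downarrow 0$. Then $\int_{D_{4R}}\beta\,dx\ge\alpha|D|/2$. Covering the compact set $\overline{D_{4R}}$ by $N=N(R,D)$ balls $\{B_R(x_i)\}_{i=1}^N$ centered in $D_{4R}$, each satisfies $B_{4R}(x_i)\subseteq D$, and a pigeonhole argument provides an index $i$ with $\int_{B_R(x_i)}\beta\,dx\ge\alpha|D|/(2N)$. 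A Chebyshev split at level $c:=\alpha|D|/(4N|B_R(x_i)|)$ then produces $G:=\{x\in B_R(x_i):\beta(x)\ge c\}$ satisfying $|G|\ge\alpha|D|/(4N)$; both $c$ and this lower bound on $|G|$ depend only on $\alpha$ and $D$.

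With this $G$, Theorem \ref{thm:obs-inq} applied with center $x_i$ and $E=[0,T]$, followed by Cauchy--Schwarz, gives
\[
\E\|z(0;T,\eta)\|^2\le C\Big(\int_0^T\big(\E\|z(t;T,\eta)\|_G^2\big)^{1/2}\,dt\Big)^2\le CT\int_0^T\E\|z(t;T,\eta)\|_G^2\,dt,
\]
where $C=C(T,D,R,|G|)$ is uniform in $\beta$. Since $\beta\ge c$ pointwise on $G$, we have $\E\|z(t)\|_G^2\le c^{-2}\E\int_G\beta(x)^2|z(t,x)|^2\,dx\le c^{-2}\E\|\beta z(t)\|^2$, and chaining the inequalities yields \eqref{eq:obs relax}. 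The principal obstacle is precisely this uniform extraction of $G$: the quantities $R$, $c$, and $|G|$ must all be bounded in terms of $\alpha$ and $D$ alone, which rules out a naive pointwise construction and motivates the covering-plus-Chebyshev approach. The extracted center $x_i$ varies with $\beta$ but lies in the fixed finite set $\{x_1,\ldots,x_N\}$, so the final constant is the maximum of finitely many constants coming from Theorem \ref{thm:obs-inq} and is therefore independent of $\beta\in\cB$.
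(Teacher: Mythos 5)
Your proof is correct, and at its core it uses the same mechanism as the paper: find a superlevel set of $\beta$ whose measure is bounded below in terms of $\alpha$ and $D$, apply the fixed-set observability inequality of Theorem \ref{thm:obs-inq} there, and then absorb the indicator into $\beta^2$ via the pointwise bound $\chi_G\le c^{-2}\beta^2\chi_G$. The difference is in how the set is produced. The paper takes $G=\{\beta\ge\sqrt{\alpha/2}\}$ globally in $D$, shows $|G|\ge\frac{\alpha}{2-\alpha}|D|$, and then invokes \eqref{eq:7.6.1} asserting that the constant depends only on $|G|$ --- silently extending Theorem \ref{thm:obs-inq}, whose statement requires $G$ to be contained in a ball $B_R(x_0)$ with $B_{4R}(x_0)\subseteq D$, to arbitrary measurable subsets of $D$. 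Your covering-plus-pigeonhole-plus-Chebyshev construction supplies exactly the localization that makes this invocation legitimate, at the cost of a worse (but still $\alpha$- and $D$-only) threshold $c$ and measure bound; in that sense your write-up is more faithful to the hypotheses of Theorem \ref{thm:obs-inq} than the paper's own proof. One small loose end, shared with the paper: the constant in Theorem \ref{thm:obs-inq} is stated as a function of $|G|$, whereas your $|G|$ varies with $\beta$ over an interval $[\alpha|D|/(4N),\,|B_R|]$; to get a single constant you should either note that the constant can be taken monotone in $|G|$, or simply replace $G$ by a measurable subset of measure exactly $\alpha|D|/(4N)$ (the $G$-norm only decreases, so the final chain of inequalities is unaffected). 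With that one-line fix the argument is complete.
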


\begin{proof}
  By Theorem \ref{thm:obs-inq}, for each $G\in \cW$, there exists a
  constant $C>0$ such that the solution of equation \eqref{eq:adjoint}
  satisfies
\begin{equation}
  \label{eq:7.6.1}
  \E \|z(0;T,\eta)\|^2 \le C \int_0^T \|\chi_G z(t;T,\eta)\|^2 dt,
\end{equation}
for all $\eta\in L^2(\Omega,\cF_T,\bP)$. Moreover, the constant $C$
only depends on the measure of the set $G$.

For any $\beta\in \cB$, let
\[
\gamma = \frac{|\{\beta\ge \sqrt{\alpha/2}\}|}{|D|}
\]
Since
\begin{align*}
  \alpha\cdot |D|
& = \int_D \beta^2 dx = \int_{\{\beta\ge \sqrt{\alpha/2}\}} \beta^2 dx +
\int_{\{\beta<\sqrt{\alpha/2}\}} \beta^2 dx\\
& \le |\{\beta\ge \sqrt{\alpha/2}\}| + \frac{\alpha}{2}\cdot |\{\beta< \sqrt{\alpha/2}\}|,
\end{align*}
we have
\[
\gamma \cdot |D| + \frac{\alpha}{2}(1-\gamma)\cdot |D|,
\]
and consequently,
\[
\gamma \ge \frac{\alpha}{2-\alpha}.
\]
Therefore, we obtain 
\begin{equation}
  \label{eq:7.6.2}
  |\{\beta\ge \sqrt{\alpha/2}\}| \ge \frac{\alpha}{2-\alpha} \cdot |D|,
\end{equation}
for all $\beta\in \cB$.
It then follows from inequality \eqref{eq:7.6.1} with $G= \{\beta\ge
\sqrt{\alpha/2}\}$ that
\begin{align*}
  \E \|z(0;T,\eta)\|^2 
& \le C \E \int_0^T \int_D \chi_{\{\beta\ge
\sqrt{\alpha/2}\}} z^2(t;T,\eta) dx dt\\
& \le C \E \int_0^T \int_D \chi_{\{\beta\ge
\sqrt{\alpha/2}\}}\left(\frac{\beta}{\sqrt{\alpha/2}}\right)^2
z^2(t;T,\eta) dx dt\\
& = C \cdot \frac{2}{\alpha}\cdot \E \int_0^T \int_D \chi_{\{\beta\ge
\sqrt{\alpha/2}\}} \beta^2 z^2(t;T,\eta) dx dt\\
& \le C \int_0^T \E \|\beta z(t;T,\eta)\|^2 dt,
\end{align*}
which completes the proof.
\end{proof}
\begin{remark}
The observability inequality \eqref{eq:obs relax} is in fact an $L^2$ estimate, and it is sufficient for our purpose in this section, though we have an $L^1$ estimate in \eqref{eq:obs-inq}.
\end{remark}
\subsection{The optimal minimal norm control}
\label{ss:optimal control}
In general, it is not easy (or impossible) to solve the problem \eqref{eq:norm optimal
  relax} directly; see \cite{Guo2014} for a special class of
subdomains. Instead, let us introduce a functional
\begin{equation}
  \label{eq:J-z-beta}
  \cJ(\eta;\beta) = \frac{1}{2}\int_0^T \E \|\beta z(t;\eta)\|^2 dt +
  \E(y_0, z(0;\eta)),
\end{equation}
and propose the following variational problem
\begin{equation}
  \label{eq:J-beta}
  \cJ(\beta) = \inf_{\eta\in L^2(\Omega,\cF_T,\bP;L^2(D))} \cJ(\eta;\beta).
\end{equation}
Here and what follows, we simply set $z(\cdot;\eta) = z(\cdot;T,\eta)$
for the solution of the adjoint equation \eqref{eq:adjoint} with the
terminal condition $z(T) = \eta$. We will show later the equivalence between the problem
\eqref{eq:J-beta} and the problem \eqref{eq:norm optimal relax}.

To this end, denote by
\begin{equation}
  \label{eq:X}
  X = \{z(\cdot;\eta)\mid \eta\in L^2(\Omega,\cF_T,\bP;L^2(D))\},
\end{equation}
and for each $\beta\in \cB$, define $F_\beta: X\to \R$ by
\begin{equation}
  \label{eq:F-beta}
  F_\beta(z) = \left(\E \int_0^T \|\beta z\|^2 dt\right)^{1/2}.
\end{equation}
It follows from the observability inequality \eqref{eq:obs relax} that
$F_\beta$ is indeed a norm on space $X$. We denote by $\ol{X_\beta}$
the completion of the space $X$ under the norm $F_\beta$. The
following proposition provides us a description of $\ol{X_\beta}$.

\begin{lemma}
  \label{lem:X-beta}
  Under an isomorphism, any element of $\ol{X_\beta}$ can be expressed
  as a process $\varphi \in L^2_\cF(\Omega;C([0,T);L^2(D)))$, which
  satisfies
\begin{equation}
  \label{eq:7.6.3}
  d\varphi = -A\varphi dt - a(t)Z dt + Zdw(t)
\end{equation}
 for some $Z\in L^2_\cF(0,T;L^2(D))$ in $L^2(0,T;L^2(D))$, $\bP$-a.s. Moreover, $\beta\varphi =
\lim_{n\to\infty}\beta z(\cdot;\eta_n)$ for some sequence
$\{\eta_n\}\subseteq L^2(\Omega,\cF_T,\bP)$ in
$L^2(\Omega;L^2((0,T)\times D))$.
\end{lemma}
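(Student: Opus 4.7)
The plan is to represent each element of $\ol{X_\beta}$ by concrete Cauchy-extracted data: starting from a Cauchy sequence $\{z(\cdot;\eta_n)\}_n \subseteq X$ under $F_\beta$, I would produce the limit process $\varphi$ and the corresponding noise integrand $Z$ solving \eqref{eq:7.6.3}. To do so, first I would localize the observability inequality in time, which is possible because we are in the regime $E=[0,T]$: repeating the proof of Theorem \ref{thm:obs-inq} with the Lebesgue point $\ell$ chosen inside $(t_0,T)$ (using $|E\cap(t_0,T)|>0$) yields, for each $t_0\in(0,T)$, a constant $C_{t_0}$ with
\[
\E\|z(t_0;\eta)\|^2 \le C_{t_0} \int_{t_0}^T \E\|\beta z(s;\eta)\|^2\,ds
\]
for all $\eta\in L^2(\Omega,\cF_T,\bP;L^2(D))$. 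Applied to $\eta_n-\eta_m$, together with the fact that $\beta z(\cdot;\eta_n)$ is Cauchy in $L^2_\cF(0,T;L^2(D))$, this already forces $\{z(t_0;\eta_n)\}_n$ to be Cauchy in $L^2(\Omega,\cF_{t_0},\bP;L^2(D))$ for every $t_0\in(0,T)$.

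I would then upgrade this pointwise-in-time convergence to pathwise convergence of $z$ and $L^2$ convergence of the noise integrand by running a standard linear BSDE energy estimate on each subinterval. On $[0,t_0]$, the pair $(z(\cdot;\eta_n), Z(\cdot;\eta_n))$ solves \eqref{eq:adjoint} with terminal datum $z(t_0;\eta_n)$, and applying \Ito to $\|z\|^2$, using $a\in L^\infty_\cF(0,T;\R)$, and closing with Gronwall produces
\[
\E\sup_{t\in[0,t_0]}\|z(t;\eta_n-\eta_m)\|^2 + \E\int_0^{t_0}\|Z(s;\eta_n-\eta_m)\|^2\,ds \le K(t_0)\,\E\|z(t_0;\eta_n-\eta_m)\|^2,
\]
so $z(\cdot;\eta_n)\to\varphi$ in $L^2_\cF(\Omega;C([0,t_0];L^2(D)))$ and $Z(\cdot;\eta_n)\to Z$ in $L^2_\cF(0,t_0;L^2(D))$. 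A diagonal argument in $t_0\uparrow T$ assembles these into $\varphi\in L^2_\cF(\Omega;C([0,T);L^2(D)))$ and a process $Z$ on $[0,T)$ with the integrability demanded in the statement. Passing to the limit inside the integral form of \eqref{eq:adjoint} on each $[0,t_0]$ shows $(\varphi,Z)$ satisfies \eqref{eq:7.6.3}, and the continuity of multiplication by $\beta\in L^\infty(D)$ on $L^2_\cF(0,T;L^2(D))$ immediately gives $\beta\varphi=\lim_n\beta z(\cdot;\eta_n)$, which is the moreover clause.

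The main obstacle I expect is the terminal endpoint $t=T$: because $F_\beta$ is strictly weaker than the $L^2(\Omega,\cF_T,\bP;L^2(D))$ norm on terminal data, $\{\eta_n\}$ generally fails to be Cauchy in $L^2$, so the limit $\varphi$ inherits no terminal value and both $\varphi$ and $Z$ can only be controlled on sub-intervals $[0,t_0]$ with $t_0<T$. This is exactly why continuity is asserted only on the half-open interval $[0,T)$ and why the statement quantifies in $\beta\varphi$ rather than in $\varphi(T)$. The careful bookkeeping lies in tracking how the constants $C_{t_0}$ and $K(t_0)$ blow up as $t_0\uparrow T$, checking that the diagonal limit yields a single coherent process on $[0,T)$ rather than merely a compatible family of restrictions, and verifying that the assignment $[\{z(\cdot;\eta_n)\}]\mapsto\varphi$ is well defined on equivalence classes, since the $F_\beta$-isometry $\|\beta\varphi\|_{L^2_\cF(0,T;L^2(D))}=\lim_n F_\beta(z(\cdot;\eta_n))$ is what realizes the advertised isomorphism.
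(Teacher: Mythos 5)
Your argument is correct, and it reaches the same conclusion by a genuinely different (and in fact sharper) mechanism than the paper. The paper applies the time-localized observability inequality only to each $z(\cdot;\eta_n)$ individually, obtaining merely a uniform bound $\E\|z(T_k;\eta_n)\|^2\le C_k$; it then extracts weakly convergent subsequences of the terminal values at a discrete sequence of times $T_k\uparrow T$, uses weak continuity of the terminal-datum-to-solution map for \eqref{eq:adjoint} to produce a compatible family $(\varphi_k,\psi_k)$ on $[0,T_k]$, identifies $\beta\varphi_k$ with the strong limit $\hat\varphi$ of $\beta z(\cdot;\eta_n)$ by matching weak and strong limits, and glues. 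You instead exploit linearity: applying the localized observability inequality to the differences $\eta_n-\eta_m$ shows $\{z(t_0;\eta_n)\}$ is Cauchy in $L^2(\Omega,\cF_{t_0},\bP;L^2(D))$ for every $t_0<T$, and the standard linear BSDE energy estimate on $[0,t_0]$ then upgrades this to strong convergence of both $z(\cdot;\eta_n)$ and $Z(\cdot;\eta_n)$ on each $[0,t_0]$. This buys you strong rather than weak subsequential convergence, eliminates the nested subsequence extraction entirely (consistency of the limits as $t_0\uparrow T$ is automatic by uniqueness of strong limits), and makes the well-definedness on $F_\beta$-equivalence classes transparent. Two small points to tighten: passing to the limit in the drift term $A\varphi$ of \eqref{eq:7.6.3} needs either the $H^1_0$ piece of the energy estimate or a weak (tested) formulation, since convergence in $C([0,t_0];L^2(D))$ alone does not control $Az_n$; and the identification $\beta\varphi=\lim_n\beta z(\cdot;\eta_n)$ in $L^2(\Omega;L^2((0,T)\times D))$ should be phrased as matching your sub-interval limits with the global strong limit $\hat\varphi$ of the Cauchy sequence $\{\beta z(\cdot;\eta_n)\}$, exactly as the paper does, rather than as continuity of multiplication by $\beta$ applied on all of $[0,T]$. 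Your closing caveat about the endpoint $t=T$ correctly diagnoses why both constructions only yield continuity on $[0,T)$ and local (in $t_0$) control of $Z$.
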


\begin{proof}
  Let $\ol{\varphi}\in (\ol{X_\beta},\ol{F_\beta})$, where
    $(\ol{X_\beta},\ol{F_\beta})$ is the completion of $(X,
    F_\beta)$. Then there exists a sequence $\{\eta_n\}\subseteq
    L^2(\Omega,\cF_T,\bP)$ such that
\[
\ol{F_\beta}(z(\cdot;\eta_n)-\ol{\varphi})\to 0,\ \text{as}~n\to\infty,
\]
from which, one has
\[
F_\beta(z(\cdot;\eta_n)-z(\cdot;\eta_m)) =
\ol{F_\beta}(z(\cdot;\eta_n)-z(\cdot;\eta_m))\to 0\ \text{as}~n, m
\to \infty.
\]
In other words,
\begin{equation}
  \label{eq:7.6.4}
  \E \int_0^T \|\beta z(t;\eta_n)-\beta z(t;\eta_m)\|^2 dt \to
  0\ \text{as}~n, m \to \infty.
\end{equation}
Hence, there exists $\hat{\varphi}\in L^2_\cF(0,T;L^2(D))$ such that
\begin{equation}
  \label{eq:7.6.5}
  \beta z(\cdot,\eta_n)\to \hat{\varphi}~\text{strongly
    in}~L^2(\Omega;(0,T)\times D).
\end{equation}
Now choose a strictly increasing sequence $\{T_k\}\subseteq (0,T)$
such that $T_k\to T$ as $k\to\infty$. Set $(z_n, Z_n) =
(z(\cdot;\eta_n), Z_n(\cdot;\eta_n))$, i.e., the solution of equation
\eqref{eq:adjoint} with the terminal condition $z_n(T)=\eta_n$.

(a) For $T_1$. By the observability inequality \eqref{eq:obs relax}
and \eqref{eq:7.6.4},
\[
  \E \|z(T_2,\eta_n)\|^2 
\le C_1 \E \int_{T_2}^T \|\beta z(t;\eta_n)\|^2 dt\le C_1 \E
\int_0^T \|\beta z(t;\eta_n)\|^2 dt\le C_1,
\]
for all $n\ge 1$. Then there exist a subsequence
$\{z(T_2,\eta_{1n})\}$ of $\{z(T_2,\eta_n)\}$ and $z_{T_2,1}\in
L^2(\Omega,\cF_{T_2},\bP)$ such that
\[
z(T_2,\eta_{1n})\to z_{T_2,1}\ \text{weakly in}~ L^2(\Omega\times D).
\]
Consequently, there exist a subsequence $\{(z_{1n},Z_{1n})\}$ of $\{(z_n, Z_n)\}$
and 
$(\varphi_1, \psi_1)$ in the space of  $L^2_\cF(\Omega;C([0,T_2];L^2(D))) \times
L^2_\cF(0,T_2;L^2(D))$ solving the adjoint equation \eqref{eq:adjoint} with the terminal
conditions $z_{1n}(T_2) = z(T_2,\eta_{1n})$ and $\varphi_1(T_2) =
z_{T_2,1}$, respectively, and
\[
(z_{1n},Z_{1n})\to (\varphi_1,\psi_1)\ \text{weakly
  in}~L^2(\Omega;C([0,T_2];L^2(D)))\times L^2(\Omega;(0,T_2)\times D).
\]
In particular,
\begin{equation}
  \label{eq:7.6.6}
  (z_{1n},Z_{1n})\to (\varphi_1,\psi_1)\ \text{weakly
    in}~L^2(\Omega;C([0,T_1];L^2(D)))\times L^2(\Omega;(0,T_1)\times D),
\end{equation}
and
\begin{equation}
  \label{eq:7.6.7}
  \beta z_{1n}\to \beta \varphi_1\ \text{weakly
    in}~L^2(\Omega;(0,T_1)\times D).
\end{equation}
Thus, it follows from \eqref{eq:7.6.5} and \eqref{eq:7.6.7} that
\[
\beta \varphi_1 = \hat{\varphi}\ \text{in}~L^2(\Omega;(0,T_1)\times D).
\]

(b) For $T_2$. In the same spirit of (a), we can find a subsequence
$\{(z_{2n},Z_{2n})\}$ of $\{(z_{1n},Z_{1n})\}$, and 
$(\varphi_2, \psi_2)$ in the space of  $L^2_\cF(\Omega;C([0,T_3];L^2(D))) \times
L^2_\cF(0,T_3;L^2(D))$ solving the adjoint equation \eqref{eq:adjoint} with the terminal
conditions $z_{2n}(T_3) = z(T_3,\eta_{2n})$ and $\varphi_1(T_3) =
z_{T_3,2}$, respectively, and
\[
(z_{2n},Z_{2n})\to (\varphi_2,\psi_2)\ \text{weakly
  in}~L^2(\Omega;C([0,T_3];L^2(D)))\times L^2(\Omega;(0,T_3)\times D),
\]
where $\{\eta_{2n}\}$ is a subsequence of $\{\eta_{1n}\}$ such that
$z(T_3;\eta_{2n})$ converges weakly to $z_{T_3,2}$ in
$L^2(\Omega\times D)$. Then it follows from \eqref{eq:7.6.5},
\eqref{eq:7.6.6} and \eqref{eq:7.6.7} that 
\[
(\varphi_2,\psi_2)\restriction_{[0,T_1]} = (\varphi_1,\psi_1),
\]
and
\[
\beta \varphi_2 = \hat{\varphi}~\text{in}~L^2(\Omega;(0,T_2)\times D).
\]

(c) In general, we obtain a sequence $\{(\varphi_k,\psi_k)\}$
satisfies for each $k\ge 1$ that
\begin{itemize}
  \item $\{(\varphi_k,\psi_k)\}\in L^2_\cF(\Omega;C([0,T_{k+1}];L^2(D))) \times
L^2_\cF(0,T_{k+1};L^2(D))$;
\item $(\varphi_{k+1},\psi_{k+1})\restriction_{[0,T_k]} =
  (\varphi_k,\psi_k)$;
\item $\{(\varphi_k,\psi_k)\}$ satisfies \eqref{eq:7.6.3} on
  $(0,T_{k+1})$;
\item $\beta \varphi_k = \hat{\varphi}$ in $L^2(\Omega;(0,T_k)\times D)$.
\end{itemize}

Now define
\[
(\varphi(t), Z(t)) = (\varphi_k(t), \psi_k(t)), \ t\in [0,T_k].
\]
Then $(\varphi(t), Z(t))\in L^2_\cF(\Omega;C([0,T);L^2(D))) \times
L^2_\cF(0,T;L^2(D))$ satisfies equation \eqref{eq:7.6.3}, and
\[
\beta \varphi = \hat{\varphi} = \lim_{n\to \infty} \beta z(\cdot;\eta_n).
\]
Under an isometric isomorphism, we can identify $\ol{\varphi}$ by
$\varphi$. The proof is completed.
\end{proof}
\begin{remark}
  \label{re:beta-phi}
  The element $\varphi$ in $\ol{X_\beta}$ is not necessarily in the space
  of $L^2_\cF(0,T;L^2(D))$, but $\beta \varphi\in
L^2_\cF(0,T;L^2(D))$ for $\beta\in \cB$. Also, because of the
isomorphism, we can write $\ol{F_\beta}(\varphi) = \left(\E\int_0^T
  \|\beta \varphi\|^2dt\right)^{1/2}$.
\end{remark}

Next, let us introduce an auxiliary operator
\begin{equation}
  \label{eq:T-beta}
  \cT_\beta: \beta \ol{X_\beta}\subseteq L^2_\cF(0,T;L^2(D))\to
  L^2(\Omega;L^2(D)),\quad \beta \varphi\to \varphi(0). 
\end{equation}
By Lemma \ref{lem:X-beta}, the operator $\cT_\beta$ is well
defined, and it is bounded as well. In fact, if we consider the
equation \eqref{eq:adjoint} on the interval $(0,T/2)$, then by the
observability inequality \eqref{eq:obs relax}, we have
\[
\E \|\varphi(0)\|^2 \le C\E \int_0^{T/2} \|\beta \varphi\|^2 dt\le C \E \int_0^T \|\beta \varphi\|^2 dt.
\]
Then the functional $\cJ(\beta)$ defined in \eqref{eq:J-beta} can be
written as
\begin{align*}
  \cJ(\beta)
& = \inf_{z\in X}\left[\frac{1}{2}\int_0^T \E \|\beta z\|^2dt +
  \E(y_0,z(0))\right]\\
& = \inf_{\varphi\in\ol{X_\beta}}\left[\frac{1}{2}\int_0^T \E \|\beta \varphi\|^2dt +
  \E(y_0,\cT_\beta(\beta \varphi))\right]\\
& = \inf_{\varphi\in\ol{X_\beta}}\left[\frac{1}{2}\int_0^T \E \|\beta \varphi\|^2dt +
  \int_0^T \E(\cT^\ast_\beta y_0,\beta \varphi) dt\right].
\end{align*}
Set $y_{0,\beta} = \cT^\ast_\beta y_0$, and thus the problem
\eqref{eq:J-beta} is equivalent to the following problem
\begin{equation}
  \label{eq:J-beta-new}
  \mathcal{V}(\beta) = \inf_{\varphi\in\ol{X_\beta}}\left[\frac{1}{2}\int_0^T \E \|\beta \varphi\|^2dt +
  \int_0^T \E(y_{0,\beta},\beta \varphi) dt\right].
\end{equation}
The key motivation of this transformation is that the functional on the right hand side of the problem
  \eqref{eq:J-beta-new} is coercive in $\varphi$ with
respect to the norm $\ol{F_\beta}$, but in general, $\cJ(\eta;\beta)$ in
\eqref{eq:J-z-beta} does not satisfy such a condition. The next
theorem characterizes the minmal norm control of problem
\eqref{eq:norm optimal relax} in terms of the
solution of the problem \eqref{eq:J-beta-new}. 
\begin{theorem}
  \label{thm:u star}
Fix $\beta\in \cB$. Suppose $y_0\in L^2(\Omega,\cF_0,\bP;L^2(D))$. Then problem \eqref{eq:J-beta-new} admits a unique
solution $\varphi^\ast$. Moreover, the control defined by
\begin{equation}
  \label{eq:u star}
  u^\ast = \beta \varphi^\ast
\end{equation}
is the minimal norm optimal control to the problem \eqref{eq:norm
  optimal relax}, and 
\begin{equation}
  \label{eq:u star norm}
  N(\beta) = \int_0^T \E \|\beta \varphi^\ast\|^2 dt.
\end{equation}
\end{theorem}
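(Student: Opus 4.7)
The plan is to treat the reformulation \eqref{eq:J-beta-new} as a strictly convex quadratic minimization on the Hilbert space $(\overline{X_\beta}, \overline{F_\beta})$ from Lemma \ref{lem:X-beta}, then transport the optimizer to the forward side via \Ito duality between \eqref{eq:relaxed eqn} and \eqref{eq:adjoint}.

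For existence and uniqueness of $\varphi^\ast$, observe that the map $\varphi\mapsto \beta\varphi$ realizes $\overline{X_\beta}$ isometrically as a subspace of $L^2_{\mathcal{F}}(0,T;L^2(D))$. Thus the quadratic part of \eqref{eq:J-beta-new} equals $\tfrac{1}{2}\overline{F_\beta}(\varphi)^2$, and the linear part $\varphi\mapsto \int_0^T\mathbb{E}(y_{0,\beta},\beta\varphi)\,dt$ is $\overline{F_\beta}$-continuous by Cauchy--Schwarz. Strict convexity, continuity, and coercivity give a unique minimizer $\varphi^\ast$ satisfying the variational identity
\[
\int_0^T\mathbb{E}(\beta\varphi^\ast,\beta\varphi)\,dt = -\int_0^T\mathbb{E}(y_{0,\beta},\beta\varphi)\,dt, \qquad \forall \varphi\in\overline{X_\beta}.
\]

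Next I would establish the \Ito duality identity. For $u\in L^2_{\mathcal{F}}(0,T;L^2(D))$ and $\eta\in L^2(\Omega,\mathcal{F}_T,\mathbb{P};L^2(D))$, applying \Ito to $(y(t;\beta,u),z(t;\eta))$ and using self-adjointness of $A$ together with the cancellation between the $-a(t)Z\,dt$ drift and the cross-variation of $a(t)y\,dw$ with $Z\,dw$ yields
\[
\mathbb{E}(y(T;\beta,u),\eta) = \mathbb{E}(y_0,z(0;\eta)) + \int_0^T \mathbb{E}(u,\beta z(t;\eta))\,dt.
\]
Since $\mathbb{E}(y_0,z(0;\eta))=\int_0^T \mathbb{E}(y_{0,\beta},\beta z(t;\eta))\,dt$ by the definitions of $\mathcal{T}_\beta$ and $y_{0,\beta}$, and since this extends by density and the isometry $\varphi\mapsto\beta\varphi$ from $X$ to $\overline{X_\beta}$, $u$ is admissible if and only if
\[
\int_0^T \mathbb{E}(u,\beta\varphi)\,dt = -\int_0^T \mathbb{E}(y_{0,\beta},\beta\varphi)\,dt, \qquad \forall \varphi\in\overline{X_\beta}.
\]

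These two identities together deliver the theorem. Putting $u=\beta\varphi^\ast$ into the admissibility identity reduces it to the Euler--Lagrange identity, so $u^\ast=\beta\varphi^\ast$ is admissible. For any admissible $u$, testing both identities with $\varphi=\varphi^\ast$ yields $\int_0^T\mathbb{E}\|\beta\varphi^\ast\|^2\,dt = \int_0^T\mathbb{E}(u,\beta\varphi^\ast)\,dt$; Cauchy--Schwarz then gives $\int_0^T\mathbb{E}\|\beta\varphi^\ast\|^2\,dt \le \int_0^T\mathbb{E}\|u\|^2\,dt$, which simultaneously proves minimality of $u^\ast$ and the identity $N(\beta)=\int_0^T\mathbb{E}\|\beta\varphi^\ast\|^2\,dt$. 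The main obstacle will be the extension step: since elements of $\overline{X_\beta}$ need not lie in $L^2_{\mathcal{F}}(0,T;L^2(D))$ (only $\beta\varphi$ does, by Remark \ref{re:beta-phi}), both the Euler--Lagrange identity and the duality identity must be proved first on the dense subset $X$ and then extended using the isometry $\varphi\mapsto\beta\varphi$ and the boundedness of $\mathcal{T}_\beta$ noted after \eqref{eq:T-beta}.
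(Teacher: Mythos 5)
Your proposal is correct and follows essentially the same route as the paper: existence and uniqueness of $\varphi^\ast$ by strict convexity, continuity and coercivity on $(\ol{X_\beta},\ol{F_\beta})$, the Euler--Lagrange identity, the \Ito duality pairing of \eqref{eq:relaxed eqn} with \eqref{eq:adjoint} to characterize admissibility, and then Cauchy--Schwarz after testing with $\varphi^\ast$ (extended by density from $X$ to $\ol{X_\beta}$, exactly as the paper does). The only cosmetic difference is that you package the null-control property and the comparison with an arbitrary admissible control into a single ``admissible iff identity'' statement, whereas the paper treats the two directions separately; the underlying computations coincide.
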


\begin{proof}
  It is obvious that the functional on the right hand side of
  \eqref{eq:J-beta-new} is continuous, strictly convex and coercive
in $\varphi$ with respect to the norm $\ol{F_\beta}$. Therefore, the
problem \eqref{eq:J-beta-new} admits a unique solution, denoted by
$\varphi^\ast$.  

It follows from Lemma \ref{lem:X-beta} that the control $u^\ast
= \beta \varphi^\ast$ is well defined and $u^\ast \in
L^2_\cF(0,T;L^2(D))$. We claim first that $u^\ast$ is a control
driving the solution $y$ of equation \eqref{eq:relaxed eqn} to rest at
time $T$. In fact, by the optimality of $\varphi^\ast$, we obtain the
following Euler-Lagrange equation to the variational problem
\eqref{eq:J-beta-new}:
\begin{equation}
  \label{eq:EL}
  \int_0^T \E (u^\ast, \beta\psi) dt + \int_0^T \E (y_{0,\beta},\beta\psi) dt =
  0, \ \text{for all}~ \psi\in \ol{X_\beta}.
\end{equation}
Taking $\psi = z(\cdot;\eta)\in X$ for any $\eta\in
L^2(\Omega;L^2(D))$, a straightforward computation and \Ito formula imply that
\[
y(T;\beta,u^\ast) = 0\ \text{in}~D, \bP\text{-a.s.}
\]

Next, we will show that $u^\ast$ is optimal in the sense that
\begin{equation}
  \label{eq:7.8.1}
\int_0^T\E \|u^\ast\|^2 dt \le \int_0^T \E \|\hat{u}\|^2 dt,
\end{equation}
for any $\hat{u}\in L^2_\cF(0,T;L^2(D))$ such that $y(T;\beta,\hat{u})
= 0$ in $D$, $\bP$-a.s. 

Without loss of generality, we assume
$u^\ast\neq 0$. By \Ito formula, we have
\[
\int_0^T \E (\hat{u}, \beta z) dt + \E (y_0, z(0)) =
  0, \ \text{for all}~ z\in X,
\]
or equivalently
\[
\int_0^T \E (\hat{u}, \beta z) dt + \int_0^T \E (y_{0,\beta},\beta z) dt =
  0, \ \text{for all}~ z\in X,
\]
which, together with equality \eqref{eq:EL}, implies
\begin{equation}
  \label{eq:7.8.2}
\int_0^T \E (u^\ast, \beta z) dt = \int_0^T \E (\hat{u}, \beta z) dt, \ \text{for all}~ z\in X.
\end{equation}
By the density argument, the equality \eqref{eq:7.8.2} still holds for
all $\psi\in \ol{X_\beta}$. Thus, replacing $z$ in \eqref{eq:7.8.2} by
$\varphi^\ast$ gives 
\[
\int_0^T \E \|u^\ast\|^2 dt = \int_0^T \E (u^\ast,\hat{u})dt\le
\left(\int_0^T\E \|u^\ast\|^2dt\right)^{1/2}\left(\int_0^T\E \|\hat{u}\|^2dt\right)^{1/2}.
\]
Therefore, the inequality \eqref{eq:7.8.1} is true and this concludes
the proof.
\end{proof}

From above, we can describe the relation between $\mathcal{V}(\beta)$ (or
equivalently $\cJ(\beta)$) and $N(\beta)$.

\begin{corollary}
  Let $\beta\in \cB$ and $y_0\in L^2(\Omega,\cF_0,\bP;L^2(D))$. Then
\begin{equation}
  \label{eq:V-N}
  \mathcal{V}(\beta) = -\frac{1}{2}N(\beta),
\end{equation}
where $\mathcal{V}(\beta)$ and $N(\beta)$ are defined as in
\eqref{eq:J-beta-new}, and \eqref{eq:norm optimal relax}, respectively.
\end{corollary}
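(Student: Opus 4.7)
The plan is to evaluate the infimum $\mathcal{V}(\beta)$ at the unique minimizer $\varphi^\ast$ produced by Theorem \ref{thm:u star} and to exploit the Euler-Lagrange identity satisfied by $\varphi^\ast$. Since Theorem \ref{thm:u star} already tells us that $\varphi^\ast$ exists, is unique, and yields $u^\ast = \beta\varphi^\ast$ with $N(\beta) = \int_0^T \E\|\beta\varphi^\ast\|^2\,dt$, the computation of $\mathcal{V}(\beta)$ reduces to a two-line algebraic manipulation.

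First, I would write out
\[
\mathcal{V}(\beta) = \frac{1}{2}\int_0^T \E\|\beta\varphi^\ast\|^2\,dt + \int_0^T \E(y_{0,\beta},\beta\varphi^\ast)\,dt.
\]
Next, I would plug $\psi = \varphi^\ast$ into the Euler-Lagrange equation \eqref{eq:EL}, namely
\[
\int_0^T \E(u^\ast,\beta\varphi^\ast)\,dt + \int_0^T \E(y_{0,\beta},\beta\varphi^\ast)\,dt = 0.
\]
Using $u^\ast = \beta\varphi^\ast$ and \eqref{eq:u star norm}, the first term equals $\int_0^T \E\|\beta\varphi^\ast\|^2\,dt = N(\beta)$, so the cross term must equal $-N(\beta)$.

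Substituting back into the expression for $\mathcal{V}(\beta)$ gives
\[
\mathcal{V}(\beta) = \tfrac{1}{2} N(\beta) - N(\beta) = -\tfrac{1}{2} N(\beta),
\]
which is the desired identity. There is really no obstacle here: the coercivity and strict convexity of the functional (used to obtain $\varphi^\ast$) and the Euler-Lagrange equation \eqref{eq:EL} have already been established in the proof of Theorem \ref{thm:u star}, so the corollary is essentially a direct bookkeeping consequence. The only point to be careful about is that the pairing $\int_0^T \E(y_{0,\beta},\beta\varphi)\,dt$ is well defined for $\varphi\in \overline{X_\beta}$, which is exactly why $y_{0,\beta} = \cT^\ast_\beta y_0$ was introduced in \eqref{eq:J-beta-new}; this justifies testing with $\psi = \varphi^\ast$ in the Euler-Lagrange relation.
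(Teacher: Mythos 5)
Your proposal is correct and follows essentially the same route as the paper: evaluate $\mathcal{V}(\beta)$ at the minimizer $\varphi^\ast$, test the Euler--Lagrange equation \eqref{eq:EL} with $\psi=\varphi^\ast$ to get $\int_0^T \E(y_{0,\beta},\beta\varphi^\ast)\,dt = -\int_0^T \E\|\beta\varphi^\ast\|^2\,dt = -N(\beta)$, and substitute back. No discrepancies worth noting.
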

\begin{proof}
  Let $\varphi^\ast$ be a solution of the problem
  \eqref{eq:J-beta-new} such that
\[
\mathcal{V}(\beta) = \frac{1}{2}\int_0^T \E \|\beta \varphi^\ast\|^2dt +
  \int_0^T \E(y_{0,\beta},\beta \varphi^\ast) dt.
\]
On the other hand, it follows from the Euler-Lagrange equation
\eqref{eq:EL} that
\[
\int_0^T \E(y_{0,\beta},\beta \varphi^\ast) dt = - \int_0^T \E \|\beta \varphi^\ast\|^2dt.
\]
Thus, by \eqref{eq:u star norm} we have
\[
\mathcal{V}(\beta) = -\frac{1}{2}\int_0^T \E \|\beta
\varphi^\ast\|^2dt = -\frac{1}{2}N(\beta).
\]
\end{proof}

\subsection{Existence of relaxed optimal actuator location}
\label{ss:relaxed optimal}
Now we are ready to show the existence of relaxed optimal actuator
location of the optimal minimal norm controls, i.e., we can find
$\beta^\ast\in\cB$ such that $N(\beta^\ast) = \inf_{\beta\in\cB}
N(\beta)$. To this end, define
\begin{equation}
  \label{eq:theta}
  \Theta = \left\{\theta\in L^\infty(D;[0,1]) \mid \int_D \theta(x) dx
      = \alpha |D|\right\}.
\end{equation}
It is clear that
\begin{equation}
  \label{eq:theta-beta}
  \beta^2\in \Theta~\text{for any}~\beta\in\cB,
  ~\text{and}~\theta^{1/2}\in\cB~\text{for all}~\theta\in\Theta.
\end{equation}
Then it follows from the relation \eqref{eq:V-N} that
\begin{align*}
  & \inf_{\beta\in\cB} \frac{1}{2}N(\beta) =
  \inf_{\beta\in\cB}-\mathcal{V}(\beta)
  =\inf_{\beta\in\cB}-\cJ(\beta)\\
= & \inf_{\beta\in\cB} \sup_{z\in X} \left[\frac{1}{2}\int_0^T \E
  \|\beta z\|^2 dt + \E (y_0,z(0))\right]\\
= & \inf_{\theta\in\Theta}\sup_{z\in X}\left[- \frac{1}{2}\E\int_0^T
  \int_D\theta z^2 dxdt - \E(y_0,z(0))\right]\\
=: & \inf_{\theta\in\Theta} \sup_{z\in X} F(\theta,z),
\end{align*} 
where the functional $F$ is defined by
\begin{equation}
  \label{eq:F}
  F(\theta,z) = - \frac{1}{2}\E\int_0^T\int_D
  \theta z^2 dxdt - \E(y_0,z(0)).
\end{equation}
Therefore, seeking a minimizer $\beta^\ast\in\cB$ for $N(\beta)$ amounts
to finding a minimizer $\theta^\ast\in\Theta$ for $\sup_{z\in
  X}F(\theta,z)$.

Let us equip $L^\infty(D)$ with the weak$^\ast$ topology. Then
$\Theta$ is compact in $L^\infty(D)$. 
\begin{lemma}
  \label{lem:lsc}
Given $y_0\in L^2(\Omega;\cF_0,\bP;L^2(D))$ and $z\in X$. Then the functional
$F(\cdot,z):\Theta\to\R\cup\{+\infty\}$ defined in \eqref{eq:F} is
sequentially weakly$^\ast$ lower semi-continuous.
\end{lemma}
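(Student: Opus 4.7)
The plan is to observe that $F(\cdot, z)$ is in fact weakly$^\ast$ continuous (hence a fortiori lower semi-continuous), since $F(\theta,z)$ depends on $\theta$ only through a linear pairing against a fixed $L^1(D)$ function. Concretely, fix $z\in X$ and set
\[
g(x) = \E\int_0^T z^2(t,x)\,dt.
\]
Then Fubini gives
\[
F(\theta,z) = -\frac{1}{2}\int_D \theta(x) g(x)\,dx - \E(y_0, z(0)),
\]
so the second summand is independent of $\theta$ and the first is an affine function of $\theta$ paired against $g$.

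Next I would check that $g\in L^1(D)$. Since $z=z(\cdot;\eta)$ belongs to $L^2_\cF(\Omega;C([0,T];L^2(D)))$ by the well-posedness of \eqref{eq:adjoint}, another application of Fubini yields
\[
\int_D g(x)\,dx = \E\int_0^T \|z(t)\|^2\,dt < \infty,
\]
and $g\ge 0$, so $g\in L^1(D)$.

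Now suppose $\{\theta_n\}\subseteq \Theta$ converges weakly$^\ast$ to some $\theta$ in $L^\infty(D)$. By the very definition of the weak$^\ast$ topology (duality $L^\infty(D) = (L^1(D))^\ast$) and the fact that $g\in L^1(D)$,
\[
\int_D \theta_n(x) g(x)\,dx \longrightarrow \int_D \theta(x) g(x)\,dx,
\]
and therefore $F(\theta_n,z)\to F(\theta,z)$. This establishes sequential weak$^\ast$ continuity, which implies the claimed lower semi-continuity. There is no real obstacle beyond verifying the integrability of $g$, which is the only place the regularity of solutions of the backward equation enters.
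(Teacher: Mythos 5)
Your proof is correct and follows essentially the same route as the paper: both arguments exploit that $F(\cdot,z)$ is affine in $\theta$, pair $\theta$ against an $L^1(D)$ density built from $z^2$, and conclude full sequential weak$^\ast$ continuity (hence lower semi-continuity). The only difference is organizational — you apply Fubini--Tonelli once to collapse the $(t,\omega)$ integration into a single function $g\in L^1(D)$ before invoking weak$^\ast$ convergence, whereas the paper applies weak$^\ast$ convergence for each fixed $t$ and then uses dominated convergence in $t$; your version is marginally more streamlined but not a different proof.
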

\begin{proof}
  Suppose there is a sequence $\{\theta_n\}\subseteq \Theta$ such that
\[
\theta_n\to \theta ~\text{weakly}^\ast~\text{in}~L^\infty(D).
\]
Then for any $t\in [0,T]$, we have
\[
\lim_{n\to\infty} \E \int_D \theta_n z^2(t) dx = \E \int_D \theta z^2(t)dx \le \E \|z(t)\|^2.
\]
Since $\int_0^T \E \|z(t)\|^2 dt<\infty$, it follows from the
Dominated Convergence Theorem, and \eqref{eq:F} that
\[
\lim_{n\to\infty} F(\theta_n,z) = F(\theta,z).
\]
So $F(\cdot,z)$ is sequentially weakly$^\ast$ continuous, and in particular, lower semi-continuous.
\end{proof}

It is obvious that the
functional $F(\cdot,z)$ is linear in $\theta$ for any $z\in X$, so it
is convex. Then it follows from Proposition 2.31 in
\cite[page 62]{Ambrosio2000} that $F(\cdot,z)$ is weakly$^\ast$ lower
semi-continuous. Under the weak$^\ast$ topology in $L^\infty(D)$,
$F(\cdot, z)$ is lower semi-continuous, so is $\sup_{z\in X}
F(\cdot,z)$. Together with the fact that $\Theta$ is compact in
$L^\infty(D)$, we claim that there exists $\theta^\ast\in\Theta$
minimizing $\sup_{z\in X} F(\cdot,z)$ by Theorem 38.B in
\cite [page 152]{Zeidler1985}. Equivalently, we obtain the following
theorem of existence to conclude this subsection.
\begin{theorem}
  \label{thm:beta-star}
  Suppose $y_0\in L^2(\Omega,\cF_0,\bP;L^2(D))$. Then the problem
  \eqref{eq:optimal location relax} admits a solution $\beta^\ast\in
  \cB$, i.e., 
\[
N(\beta^\ast) = \inf_{\beta\in \cB} N(\beta).
\]
\end{theorem}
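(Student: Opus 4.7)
The plan is to exploit the min-max reformulation
\[
\inf_{\beta\in\cB}\tfrac12 N(\beta)=\inf_{\theta\in\Theta}\sup_{z\in X} F(\theta,z)
\]
already established via the identity $\mathcal{V}(\beta)=-\frac12 N(\beta)$ and the bijection $\beta\mapsto\beta^2$ between $\cB$ and $\Theta$. Thus it suffices to produce a minimizer $\theta^\ast\in\Theta$ of the map $\theta\mapsto\Phi(\theta):=\sup_{z\in X}F(\theta,z)$; the recovered element $\beta^\ast=(\theta^\ast)^{1/2}\in\cB$ will then satisfy $N(\beta^\ast)=\inf_{\beta\in\cB}N(\beta)$.

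Working in the weak$^\ast$ topology of $L^\infty(D)$, I would first verify that $\Theta$ is compact: it is contained in the closed unit ball of $L^\infty(D)$, which is weak$^\ast$ compact by Banach--Alaoglu, and the constraint $\int_D\theta\,dx=\alpha|D|$ is a weak$^\ast$ closed condition (it involves pairing with $\chi_D\in L^1(D)$), while $0\le\theta\le1$ is preserved under weak$^\ast$ limits. Hence $\Theta$ is weak$^\ast$ compact.

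Next I would establish that $\Phi$ is weak$^\ast$ lower semi-continuous on $\Theta$. By Lemma \ref{lem:lsc}, each slice $F(\cdot,z)$ is (in fact sequentially) weak$^\ast$ continuous, hence lower semi-continuous. Since it is also linear in $\theta$, Proposition 2.31 of \cite{Ambrosio2000} upgrades sequential weak$^\ast$ lower semi-continuity to weak$^\ast$ lower semi-continuity. The supremum of an arbitrary family of weak$^\ast$ lower semi-continuous functions is itself weak$^\ast$ lower semi-continuous, so $\Phi$ has the required property.

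Finally, combining weak$^\ast$ compactness of $\Theta$ with weak$^\ast$ lower semi-continuity of $\Phi$, the generalized Weierstrass theorem (Theorem 38.B in \cite{Zeidler1985}) yields the existence of a minimizer $\theta^\ast\in\Theta$. Setting $\beta^\ast=(\theta^\ast)^{1/2}$ and invoking \eqref{eq:theta-beta} together with \eqref{eq:V-N} gives $N(\beta^\ast)=\inf_{\beta\in\cB}N(\beta)$. The main subtlety I expect is confirming that passing from continuity of each $F(\cdot,z)$ to lower semi-continuity of the supremum $\Phi$ is legitimate in the non-metrizable weak$^\ast$ topology (which is why one first needs the non-sequential lower semi-continuity from Proposition 2.31, rather than the bare sequential statement in Lemma \ref{lem:lsc}); everything else reduces to recording the equivalences already proved.
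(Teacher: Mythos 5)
Your proposal is correct and follows essentially the same route as the paper: the min--max reformulation via $\mathcal{V}(\beta)=-\tfrac12 N(\beta)$ and the correspondence $\beta\mapsto\beta^2$, weak$^\ast$ compactness of $\Theta$, lower semi-continuity of $\sup_{z\in X}F(\cdot,z)$ obtained from Lemma \ref{lem:lsc} upgraded by Proposition 2.31 of \cite{Ambrosio2000}, and the generalized Weierstrass theorem (Theorem 38.B in \cite{Zeidler1985}). Your explicit Banach--Alaoglu justification of the compactness of $\Theta$ and your remark on the sequential versus topological lower semi-continuity issue are welcome elaborations of steps the paper merely asserts.
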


\subsection{Characterization via Nash equilibrium}
\label{sec:char-via-nash}
Now we define a non-negative nonlinear functional $F_\Theta$ on $X$ by
\begin{equation}
  \label{eq:F-theta}
  F_\Theta(z):= \sup_{\theta\in\Theta} F_{\theta^{1/2}}(z),\ z\in X
\end{equation}
where $F_{\theta^{1/2}}$ is defined as in \eqref{eq:F-beta}.
Since $F_{\theta^{1/2}}$ is a norm on $X$ for each $\theta\in \Theta$,
$F_\Theta$ is also a norm on $X$. Thus, $(X, F_\Theta)$ is a normed
space, and we denote by $(\ol{X_\Theta},\ol{F_\Theta})$ its
completion.

Along the same line in the proof of Lemma \ref{lem:X-beta}, we
have the following similar result.
\begin{lemma}
  \label{lem:X-theta}
  Under an isomorphism, any element of $\ol{X_\Theta}$ can be expressed
  as a process $\varphi \in L^2_\cF(\Omega;C([0,T);L^2(D)))$, which
  satisfies
\begin{equation}
  \label{eq:7.12.5}
  d\varphi = -A\varphi dt - a(t)Z dt + Zdw(t)
\end{equation}
 for some $Z\in L^2_\cF(0,T;L^2(D))$ in $L^2(0,T;L^2(D))$, $\bP$-a.s. Moreover, $F_\Theta(\varphi) =
\lim_{n\to\infty}F_\Theta(z(\cdot;\eta_n))$ for some sequence
$\{\eta_n\}\subseteq L^2(\Omega,\cF_T,\bP)$.
\end{lemma}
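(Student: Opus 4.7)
The plan is to reproduce the proof of Lemma~\ref{lem:X-beta} with one convenient simplification. Because the constant function $\theta_0\equiv\alpha$ lies in $\Theta$ and $\sqrt{\theta_0}=\sqrt{\alpha}\in\cB$, taking this particular $\theta$ in the supremum defining $F_\Theta$ yields
\begin{equation*}
F_\Theta(z)^{2}\;\ge\;F_{\sqrt{\alpha}}(z)^{2}\;=\;\alpha\,\E\int_{0}^{T}\|z(t)\|^{2}\,dt,
\end{equation*}
so the norm $F_\Theta$ dominates, up to a multiplicative constant, the standard $L^{2}_{\cF}(0,T;L^{2}(D))$ norm. This estimate is the key structural feature absent in Lemma~\ref{lem:X-beta}, and it will allow me to bypass the ``approximate $\beta z$ first, then recover $\varphi$'' bookkeeping.

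Given $\bar\varphi\in\overline{X_\Theta}$, I choose $\{\eta_{n}\}\subseteq L^{2}(\Omega,\cF_T,\bP;L^{2}(D))$ with $\overline{F_\Theta}(z(\cdot;\eta_{n})-\bar\varphi)\to 0$. The preceding display forces $\{z(\cdot;\eta_{n})\}$ to be Cauchy in $L^{2}_{\cF}(0,T;L^{2}(D))$, hence strongly convergent to some $\hat\varphi$ in that space. Next, I fix $T_{k}\uparrow T$ and apply Lemma~\ref{lem:obs relax} with $\beta=\sqrt{\alpha}$ to the solution on the subinterval $(T_{k+1},T)$ to get
\begin{equation*}
\E\|z(T_{k+1};\eta_{n})\|^{2}\;\le\;C\alpha^{-1}F_\Theta(z(\cdot;\eta_{n}))^{2},
\end{equation*}
uniformly bounded in $n$. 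I then run the nested-subsequence diagonalization in steps~(a)--(c) of Lemma~\ref{lem:X-beta} verbatim: extract weak subsequential limits of $z(T_{k+1};\eta_{n})$ in $L^{2}(\Omega,\cF_{T_{k+1}},\bP;L^{2}(D))$, solve the backward equation \eqref{eq:7.12.5} with these weak limits as terminal data to produce a compatible family $(\varphi_{k},\psi_{k})$ on $[0,T_{k+1}]$, and glue them to obtain $(\varphi,Z)\in L^{2}_{\cF}(\Omega;C([0,T);L^{2}(D)))\times L^{2}_{\cF}(0,T;L^{2}(D))$ satisfying \eqref{eq:7.12.5}. Uniqueness of weak limits identifies $\varphi$ with $\hat\varphi$ almost everywhere on $(0,T)$.

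The identification of $\bar\varphi$ with $\varphi$ under the completion isomorphism then follows immediately, and the identity $F_\Theta(\varphi)=\lim_{n}F_\Theta(z(\cdot;\eta_{n}))$ is simply continuity of the extended norm $\overline{F_\Theta}$ along its defining Cauchy sequence. The main obstacle I anticipate is the standard but tedious bookkeeping needed when passing to the weak limit through the stochastic integral $\int Z_{n}\,dw$ at each stage of the diagonalization; however, the linearity of \eqref{eq:adjoint} together with the uniform BSDE estimate on each $[0,T_{k+1}]$ makes this routine, and the domination $F_\Theta\gtrsim\|\cdot\|_{L^{2}_{\cF}(0,T;L^{2}(D))}$ further streamlines the compatibility step compared with Lemma~\ref{lem:X-beta}.
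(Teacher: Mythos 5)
Your proposal is correct, and it follows the same skeleton the paper intends (the paper offers no separate proof of Lemma~\ref{lem:X-theta}, only the remark that it goes ``along the same line'' as Lemma~\ref{lem:X-beta}), but you add a genuine and worthwhile simplification. By testing the supremum in \eqref{eq:F-theta} against the constant density $\theta_0\equiv\alpha\in\Theta$ you get $F_\Theta(z)^2\ge\alpha\,\E\int_0^T\|z\|^2\,dt$, so an $\ol{F_\Theta}$-Cauchy sequence $\{z(\cdot;\eta_n)\}$ is already strongly Cauchy in $L^2_\cF(0,T;L^2(D))$; in Lemma~\ref{lem:X-beta} only $\beta z(\cdot;\eta_n)$ converges, which is precisely why the paper's proof must carry the auxiliary limit $\hat\varphi=\lim\beta z_n$ through the construction. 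The paper effectively obtains the same domination only \emph{after} the lemma, via the covering $D=\bigcup_{j=1}^{n_0}G_j$ and the bound $F_{\chi_{G_j}}\le F_\Theta$, to prove $\ol{X_\Theta}\subseteq L^2_\cF(0,T;L^2(D))$ and the norm equivalence; your constant-density observation delivers this up front and more cheaply. One further consequence you could have cashed in: applying Lemma~\ref{lem:obs relax} with $\beta=\sqrt{\alpha}$ on $(T_{k+1},T)$ to the \emph{differences} $z(\cdot;\eta_n-\eta_m)$ shows that the terminal traces $z(T_{k+1};\eta_n)$ converge strongly in $L^2(\Omega,\cF_{T_{k+1}},\bP;L^2(D))$, so the nested weak-subsequence diagonalization of steps (a)--(c) becomes unnecessary --- boundedness of the linear solution map of \eqref{eq:7.12.5} on each $[0,T_{k+1}]$ already yields the compatible family $(\varphi_k,\psi_k)$ without passing to subsequences. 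Retaining the diagonalization is harmless, just redundant; the remaining steps (gluing, identification with $\hat\varphi$, and the convergence of $F_\Theta(z(\cdot;\eta_n))$ to $F_\Theta(\varphi)$ via $F_\Theta(\cdot)\le\|\cdot\|_{L^2_\cF(0,T;L^2(D))}$) are all sound.
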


By Lemma \ref{lem:X-theta}, we have the following inclusion relation:
\begin{equation}
  \label{eq:7.11.4}
  \ol{X_\Theta}\subseteq L^2_\cF(0,T;L^2(D)).
\end{equation}
In fact, suppose that $n_0\in \N$ so that $n_0\ge 1/\alpha$. Then there
are $n_0$ measurable subsets $G_1,\cdots, G_{n_0}$ of $D$ such that
\[
G_j\in \cW,\ 1\le j\le n_0,\ \text{and}~\bigcup_{j=1}^{n_0} G_j = D.
\]
Then the inclusion relation follows from
\begin{align*}
  \int_0^T \E \|\varphi\|^2 dt
& = \int_0^T \E \left\|\varphi\sum_{j=1}^{n_0}\chi_{G_j}\right\|^2
dt\\
& \le n_0 \sum_{j=1}^{n_0}\int_0^T \E \|\varphi\chi_{G_j}\|^2 dt\\
& \le n_0 \sum_{j=1}^{n_0} \ol{F_\Theta}^2(\varphi) = n_0^2 \ol{F_\Theta}^2(\varphi).
\end{align*}
On the other hand, it is obvious that $\ol{F_\Theta}(\varphi)\le \int_0^T
\E \|\varphi\|^2 dt$. Thus, $F_\Theta$ and
$\left(\E\|\cdot\|^2_{L^2((0,T)\times D)}\right)^{1/2}$ are equivalent
norms on $X$.

In this subsection, we solve the following Nash equilibrium problem of
two-person zero-sum game (see Appendix): to find $\bar{\theta}\in
\Theta, \bar{\varphi}\in \ol{X_\Theta}$ such that
\begin{equation}
  \label{eq:Nash}
  F(\bar{\theta},\bar{\varphi}) = \sup_{\varphi\in
   \ol{X_\Theta}}F(\bar{\theta},\varphi)=\inf_{\theta\in\Theta}F(\theta,\bar{\varphi}),
\end{equation}
where $F(\theta,\varphi)$ is defined as in \eqref{eq:F}. This requires
by Theorem \ref{thm:Nash} in Appendix that we solve the following two problems
\begin{equation}
  \label{eq:7.12.1}
  \inf_{\theta\in\Theta}\sup_{\varphi\in\ol{X_\Theta}}F(\theta,\varphi),
\end{equation}
and
\begin{equation}
  \label{eq:7.12.2}
  \sup_{\varphi\in\ol{X_\Theta}}\inf_{\theta\in\Theta}F(\theta,\varphi),
\end{equation}
and verify the equality \eqref{eq:Nash}.

In fact, the problem \eqref{eq:7.12.1} is solved by choosing
$\bar{\theta}= (\beta^*)^2$, where $\beta^\ast\in\cB$ is a solution of the
problem \eqref{eq:optimal location relax}, guaranteed by Theorem
\ref{thm:beta-star}. To see this clearly, recall that $X$ is dense in
$\ol{X_\beta}$, and for each $\theta\in\Theta$ with $\theta=\beta^2$
\[
\sup_{\varphi\in X} F(\theta, \varphi) = \sup_{\varphi\in\ol{X_\beta}}F(\theta,\varphi),
\]
where we use the fact that $F(\theta,\cdot)$ is continuous with
respect to the norm $\ol{F_\beta}$, the completion of $F_\beta$ in
\eqref{eq:F-beta}. On the other hand, since for each $\beta\in\cB$, we have
\[
\int_0^T \E\|\beta z\|^2 dt \le F^2_\Theta(z),\ \forall z\in X,
\]
which implies
\[
\ol{X_\Theta}\subseteq \ol{X_\beta},\ \forall \beta\in\cB.
\]
Therefore,
\begin{equation}
  \label{eq:7.12.7}
\sup_{z\in
  X}F(\theta,z)\le\sup_{\varphi\in\ol{X_\Theta}}F(\theta,\varphi)
\le\sup_{\varphi\in\ol{X_\beta}}F(\beta^2,\varphi) =\sup_{z\in X}F(\theta,z),
\end{equation}
and thus
\[
\inf_{\beta\in\cB}\frac{1}{2}N(\beta) = \inf_{\beta\in\cB}-\cJ(\beta)
= \inf_{\theta\in\Theta}\sup_{z\in X}F(\theta,z) = \inf_{\theta\in\Theta}\sup_{\varphi\in\ol{X_\Theta}}F(\theta,\varphi).
\]
So the problem \eqref{eq:7.12.1} is solved by Theorem
\ref{thm:beta-star}.

To solve the problem \eqref{eq:7.12.2} is to find
$\bar{\varphi}\in\ol{X_\Theta}$ such that
\[
\inf_{\theta\in\Theta}F(\theta,\bar{\varphi}) = \sup_{\varphi\in\ol{X_\Theta}}\inf_{\theta\in\Theta}F(\theta,\varphi),
\]
or equivalently,
\begin{equation}
  \label{eq:7.12.3}
  \sup_{\theta\in\Theta}-F(\theta,\bar{\varphi}) = \inf_{\varphi\in\ol{X_\Theta}}\sup_{\theta\in\Theta}-F(\theta,\varphi).
\end{equation}
\begin{lemma}
  \label{lem:-F}
For any $y_0\in L^2(\Omega,\cF_0,\bP;L^2(D))$, the problem \eqref{eq:7.12.3}
admits a unique solution.
\end{lemma}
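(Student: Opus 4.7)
The plan is to recast \eqref{eq:7.12.3} as an unconstrained minimization of
\[
\Phi(\varphi)\;:=\;\sup_{\theta\in\Theta}(-F(\theta,\varphi))\;=\;\tfrac{1}{2}\,\ol{F_\Theta}^2(\varphi)\;+\;\E(y_0,\varphi(0))
\]
on the Banach space $(\ol{X_\Theta},\ol{F_\Theta})$ (the $\theta$-independent linear term slides outside the supremum), and then apply the direct method of the calculus of variations.

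First I would check that $\Phi$ is continuous, convex, and coercive on $(\ol{X_\Theta},\ol{F_\Theta})$. The squared-norm part is clear, and for the linear part I would observe that the constant $\beta_0\equiv\sqrt{\alpha}$ belongs to $\cB$ with $\ol{F_{\beta_0}}\le\ol{F_\Theta}$, so the boundedness of $\cT_{\beta_0}$ in \eqref{eq:T-beta} gives $\E\|\varphi(0)\|^2\le C\,\ol{F_\Theta}^2(\varphi)$ and hence $|\E(y_0,\varphi(0))|\le\sqrt{C}\,\|y_0\|\,\ol{F_\Theta}(\varphi)$; this, combined with the squared-norm leading term, yields coercivity. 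Using the inclusion \eqref{eq:7.11.4} and the norm equivalence noted right after it, $\ol{X_\Theta}$ is a closed subspace of the Hilbert space $L^2_\cF(0,T;L^2(D))$, hence reflexive, and a standard direct-method argument (weak compactness of a minimizing sequence plus weak lower semi-continuity of convex continuous functionals) produces a minimizer $\bar\varphi\in\ol{X_\Theta}$.

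For uniqueness, suppose $\bar\varphi_1,\bar\varphi_2\in\ol{X_\Theta}$ are both minimizers. Convexity makes the midpoint $\bar\varphi_m=\tfrac{1}{2}(\bar\varphi_1+\bar\varphi_2)$ a minimizer as well, and since the linear part is affine, this forces $F_\Theta^2(\bar\varphi_m)=\tfrac{1}{2}\bigl(F_\Theta^2(\bar\varphi_1)+F_\Theta^2(\bar\varphi_2)\bigr)$. Choosing $\theta^\ast\in\Theta$ that attains the supremum defining $F_\Theta^2(\bar\varphi_m)$ (which exists by the weak$^*$ compactness of $\Theta$ in $L^\infty(D)$) and combining the parallelogram identity in the weighted space $L^2(\theta^\ast\,d\bP\otimes dt\otimes dx)$ with the bound $\E\int\theta^\ast\bar\varphi_i^2\le F_\Theta^2(\bar\varphi_i)$, I would deduce
\[
\E\int_0^T\!\!\int_D\theta^\ast(\bar\varphi_1-\bar\varphi_2)^2\,dxdt=0.
\]
Setting $\xi:=\bar\varphi_1-\bar\varphi_2$ and $\beta^\ast:=(\theta^\ast)^{1/2}\in\cB$, this is $\E\int_0^T\|\beta^\ast\xi\|^2\,dt=0$. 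Since $\xi$ solves a backward adjoint SDE of the form \eqref{eq:7.12.5} by Lemma \ref{lem:X-theta}, approximating $\xi$ by differences of elements of $X$ and applying the observability inequality \eqref{eq:obs relax} on each sub-interval $[t_0,T]$ would yield $\E\|\xi(t_0)\|^2=0$ for all $t_0\in[0,T)$, so that $\bar\varphi_1=\bar\varphi_2$ in $\ol{X_\Theta}$.

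The main obstacle will be this last uniqueness step: $\Phi$ is only convex and not strictly convex on abstract $L^2$ data, so one cannot invoke strict convexity directly. The crucial extra input is that every element of $\ol{X_\Theta}$ is a solution of a backward stochastic heat equation, which allows the observability inequality \eqref{eq:obs relax} itself to upgrade the vanishing of $\theta^\ast\xi$ (on a set whose support has positive Lebesgue measure) to the vanishing of $\xi$ at every earlier time, and hence throughout $\ol{X_\Theta}$.
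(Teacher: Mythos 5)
Your proposal is correct, and for existence it follows the same route as the paper: rewrite \eqref{eq:7.12.3} as minimization of $\varphi\mapsto\sup_{\theta\in\Theta}(-F(\theta,\varphi))=\tfrac12\ol{F_\Theta}^2(\varphi)+\E(y_0,\varphi(0))$ over $\ol{X_\Theta}$, establish continuity and coercivity via the observability bound $\E\|\varphi(0)\|^2\le C\,\ol{F_\Theta}^2(\varphi)$, and invoke the direct method (you make the reflexivity of $\ol{X_\Theta}$ explicit through the norm equivalence after \eqref{eq:7.11.4}, which the paper leaves implicit). Where you genuinely diverge is uniqueness. The paper simply asserts that the functional is strictly convex; but $\ol{F_\Theta}^2$ is a supremum of the degenerate quadratic forms $\varphi\mapsto\E\int_0^T\int_D\theta\varphi^2\,dxdt$, and a supremum of non-strictly-convex functions need not be strictly convex, so that assertion is not self-evident. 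Your argument supplies exactly the missing justification: equality in convexity at the midpoint, attainment of the supremum at some $\theta^\ast$ by weak$^\ast$ compactness of $\Theta$ and weak$^\ast$ continuity of $\theta\mapsto\E\int_0^T\int_D\theta\varphi_m^2\,dxdt$ (as in Lemma \ref{lem:lsc}), the parallelogram identity in $L^2(\theta^\ast)$ to get $\E\int_0^T\|(\theta^\ast)^{1/2}(\bar\varphi_1-\bar\varphi_2)\|^2dt=0$, and then the observability inequality \eqref{eq:obs relax} with $\beta^\ast=(\theta^\ast)^{1/2}\in\cB$ to upgrade this to $\bar\varphi_1=\bar\varphi_2$. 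In effect you prove that $\ol{F_\Theta}^2$ \emph{is} strictly convex on $\ol{X_\Theta}$ precisely because its elements solve the backward equation, which is the content hidden behind the paper's ``it is clear.'' One small technical caution: since elements of $\ol{X_\Theta}$ live in $C([0,T);L^2(D))$ only, apply the observability inequality on subintervals $[t_0,T']$ with $T'<T$ (terminal datum $\xi(T')$) and let $T'\to T$, rather than on $[t_0,T]$ directly; with that adjustment your proof is complete and arguably more rigorous than the paper's.
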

\begin{proof}
Define the functional $F:\ol{X_\Theta}\to \R$ by
\[
F(\varphi):=\sup_{\theta\in\Theta}-F(\theta,\varphi).
\]
Then
\[
F(\varphi) =\sup_{\theta\in\Theta}\frac{1}{2}\int_0^T
\E \|\theta^{1/2}\varphi\|^2 dt + \E(y_0,\varphi(0))= \ol{F_\Theta}^2(\varphi)+\E(y_0,\varphi(0)).
\]
It is clear that $F$ is strictly convex in $\varphi$. To show
continuity and coercivity, we consider the equation \eqref{eq:7.12.5}
on the time interval $(0,T/2)$. Then by the observability inequality
\eqref{eq:obs relax}, we have for all $\beta\in\cB$ and $\varphi\in\ol{X_\Theta}$
\begin{equation}
  \label{eq:7.12.6}
\E \|\varphi(0)\|^2 \le C\E \int_0^{T/2} \|\beta \varphi\|^2 dt\le C
\E \int_0^T \|\beta \varphi\|^2 dt\le C \ol{F_\Theta}^2(\varphi).
\end{equation}
Thus, by Cauchy-Schwartz inequality, we have $|\E(y_0,\varphi(0))|\le C\ol{F_\Theta}(\varphi)$. Now suppose
there exists a sequence $\{\varphi_n\}\subseteq \ol{X_\Theta}$ such
that $\varphi_n\to \varphi$ in $\ol{X_\Theta}$, i.e.,
$\ol{F_\Theta}(\varphi_n-\varphi)\to 0$, then
\begin{align*}
  |F(\varphi_n)-F(\varphi)|
& \le \left|\ol{F_\Theta}^2(\varphi_n)-\ol{F_\Theta}^2(\varphi)\right|
+ \left|\E(y_0,(\varphi_n-\varphi)(0))\right|\\
& \le C\left|\ol{F_\Theta}(\varphi_n)-\ol{F_\Theta}(\varphi)\right| +
C\ol{F_\Theta}(\varphi_n-\varphi)\\
& \le C\ol{F_\Theta}(\varphi_n-\varphi)\to 0,
\end{align*}
which implies that $F$ is continuous. Finally, it follows from
\eqref{eq:7.12.6} that
\[
F(\varphi) \ge \ol{F_\Theta}^2(\varphi) - C\ol{F_\Theta}(\varphi),
\]
and so $F$ is coercive. Hence, the problem \eqref{eq:7.12.3} has a
unique solution.
\end{proof}

Now it remains to show the equality \eqref{eq:Nash} holds. To this
end, denote by
\begin{equation}
  \label{eq:U+-}
  U^+ = \inf_{\theta\in\Theta} \sup_{z\in X} F(\theta,z),\ U^- =
  \sup_{z\in X} \inf_{\theta\in \Theta} F(\theta, z), 
\end{equation}
where $F$ is defined in \eqref{eq:F}. Let $\cK$ be the collection
of all the finite subsets of $X$. For any $K\in\cK$, set
\begin{equation}
  \label{eq:U-K}
  U_K = \inf_{\theta\in\Theta}\sup_{z\in K} F(\theta, z),\
  \hat{U}:=\sup_{K\in\cK} U_K.
\end{equation}
Then it is easy to verify that
\begin{equation}
  \label{eq:7.11.1}
  U^- \le \hat{U} \le U^+.
\end{equation}
Furthermore, we can obtain the equalities in \eqref{eq:7.11.1}.
\begin{proposition}
  Define $U^-$ and $U^+$ as in \eqref{eq:U+-}, then
\[
U^- = U^+.
\]
\end{proposition}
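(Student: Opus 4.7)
The plan is to establish the two inequalities $U^+ \le \hat U$ and $\hat U \le U^-$; combined with the already-noted $U^- \le \hat U \le U^+$, these force equality.

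For $U^+ \le \hat U$, the key ingredient is the weak$^*$ compactness of $\Theta$ in $L^\infty(D)$ (Banach-Alaoglu), together with the fact that each map $\theta \mapsto F(\theta,z)$ is weak$^*$ continuous on $\Theta$ (the argument in Lemma \ref{lem:lsc} actually yields continuity, not just lower semicontinuity, since $F(\cdot,z)$ is affine in $\theta$). Fix any $c < U^+$. Then for every $\theta \in \Theta$ one has $\sup_{z \in X} F(\theta,z) > c$, so there exists $z_\theta \in X$ with $F(\theta,z_\theta) > c$. Hence the weak$^*$-open sets $A_z := \{\theta \in \Theta : F(\theta,z) > c\}$ cover $\Theta$. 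Extracting a finite subcover $A_{z_1},\ldots,A_{z_n}$ and putting $K = \{z_1,\ldots,z_n\} \in \cK$, we obtain $\max_{1 \le i \le n} F(\theta,z_i) > c$ for every $\theta \in \Theta$, whence $U_K \ge c$ and therefore $\hat U \ge c$. Letting $c \nearrow U^+$ finishes this direction.

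For $\hat U \le U^-$, I would fix an arbitrary finite $K = \{z_1,\ldots,z_n\} \in \cK$ and exchange the inf over $\Theta$ and the max over $K$ using a finite-dimensional minimax theorem. Since each $\theta \mapsto F(\theta,z_i)$ is affine and weak$^*$ continuous on the convex weak$^*$-compact set $\Theta$, a standard Fan/Sion minimax (or equivalently LP duality) gives
\[
U_K = \inf_{\theta \in \Theta} \max_{1 \le i \le n} F(\theta,z_i) = \max_{\mu \in \Delta^n} \inf_{\theta \in \Theta} \sum_{i=1}^n \mu_i F(\theta,z_i),
\]
with $\Delta^n$ the standard probability simplex. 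Now I exploit the concavity of $F(\theta,\cdot)$ on the vector space $X$: since $\theta \ge 0$ and $t \mapsto t^2$ is convex, Jensen's inequality gives $\sum_i \mu_i z_i^2 \ge (\sum_i \mu_i z_i)^2$ pointwise, while the linear part $-\E(y_0,z(0))$ is affine in $z$; combining these,
\[
\sum_{i=1}^n \mu_i F(\theta,z_i) \le F\Bigl(\theta,\sum_{i=1}^n \mu_i z_i\Bigr).
\]
Because the adjoint equation \eqref{eq:adjoint} is linear in $\eta$, the set $X$ is a linear subspace and $\sum_i \mu_i z_i \in X$. Taking $\inf_\theta$ and then $\sup_\mu$ yields $U_K \le U^-$, and since $K$ was arbitrary we conclude $\hat U \le U^-$.

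The main obstacle is the exchange step in the second inequality: it requires \emph{both} the weak$^*$ compactness and convexity of $\Theta$ (to apply the finite minimax to the affine functionals $F(\cdot,z_i)$) \emph{and} the concavity of $F(\theta,\cdot)$ combined with the linearity of $X$ (to absorb the convex combination $\sum_i \mu_i z_i$ back into a single element of $X$, where the $\inf_\theta$ may be bounded by $U^-$). The first inequality is, by contrast, a clean open-cover argument that is essentially automatic once the weak$^*$ continuity of $F(\cdot,z)$ is recorded.
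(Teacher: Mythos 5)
Your proposal is correct, and it follows the same overall skeleton as the paper's proof --- sandwiching $\hat U$ between $U^-$ and $U^+$ via \eqref{eq:7.11.1} and then closing the gap from both ends --- but the two halves are executed differently. For $U^+\le\hat U$, the paper first produces, for each finite $K$, an actual minimizer $\theta_K$ of $\sup_{z\in K}F(\cdot,z)$ over the weak$^*$ compact set $\Theta$, and then runs a finite-intersection-property argument on the weak$^*$ closed sets $S_z=\{\theta\in\Theta: F(\theta,z)\le \hat U\}$; your open-cover argument with the sets $A_z=\{\theta\in\Theta: F(\theta,z)>c\}$ for $c<U^+$ is the exact topological dual of this, and has the mild advantage of not needing the minimizers $\theta_K$ to exist --- only the definition of $U_K$ as an infimum is used. (Note that openness of $A_z$ requires only lower semicontinuity of $F(\cdot,z)$, which is precisely what Lemma \ref{lem:lsc} supplies.) For the other half, the paper disposes of $U^-=\hat U$ by citing Proposition 8.3 of Aubin, whereas you prove the needed lopsided-minimax statement from scratch: finite-dimensional minimax (LP duality/Sion) over the simplex $\Delta^n$, followed by Jensen's inequality $\bigl(\sum_i\mu_i z_i\bigr)^2\le\sum_i\mu_i z_i^2$ together with $\theta\ge 0$ to get $\sum_i\mu_i F(\theta,z_i)\le F\bigl(\theta,\sum_i\mu_i z_i\bigr)$, and the linearity of the adjoint equation to keep $\sum_i\mu_i z_i$ inside $X$. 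This makes your argument self-contained where the paper's is not, and it also correctly identifies that $F(\theta,\cdot)$ is \emph{concave} in $z$, which is what the minimax exchange actually requires in the $\sup$ variable; the paper's remark that ``$F(\theta,\cdot)$ is convex'' appears to be a slip. Both routes rely on the same two structural facts --- weak$^*$ compactness/convexity of $\Theta$ with affine continuous $F(\cdot,z)$, and concavity of $F(\theta,\cdot)$ on the linear space $X$ --- so the proofs are morally equivalent, with yours trading a citation for an explicit two-line duality computation.
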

\begin{proof}
We first show that $U^+\le \hat{U}$.

  Given any $K\in\cK$, using a similar argument to the one above Theorem
  \ref{thm:beta-star}, we can find $\theta_K\in\Theta$ such that
\[
\sup_{z\in K}F(\theta_K,z) = \inf_{\theta\in\Theta}\sup_{z\in
  K}F(\theta,z) = U_K.
\]

This, together with the definition of $\hat{U}$ in \eqref{eq:U-K},
enables us to derive
\begin{equation}
  \label{eq:7.11.2}
  F(\theta_K,z) \le U_K \le \hat{U},\ \text{for all}~z\in K.
\end{equation}
Let $z\in X$, define
\[
S_z:= \{\theta\in\Theta \mid F(\theta,z)\le \hat{U}\}.
\]
It follows from \eqref{eq:7.11.2} that the set $S_z$ is not empty, and 
\begin{equation}
  \label{eq:7.11.3}
  \{\theta_K\} \subseteq \bigcap_{z\in K} S_z \neq \emptyset.
\end{equation}
In addition, since $F(\cdot, z)$ is weakly$^\ast$ lower
semi-continuous, $S_z$ is weakly$^\ast$ closed in $L^\infty(D)$. By
the compactness of $\Theta$ under the weak$^\ast$ topology of
$L^\infty(D)$, we have
\[
\bigcap_{z\in X} S_z \neq \emptyset.
\]
Thus, there exists $\hat{\theta}\in\Theta$ such that $
\sup_{z\in Z}F(\hat{\theta},z)\le\hat{U}$,
and so
\[
U^+ = \inf_{\theta\in\Theta}\sup_{z\in X} F(\theta,z) \le \hat{U}.
\]

Next, we show $U^- = \hat{U}$.

It is clear that both $\Theta$ and $X$ are convex sets. Note that $F(\theta,\cdot)$ is convex for each
$\theta\in\Theta$ and $F(\cdot, z)$ is convex (in fact, it is linear)
for each $z\in X$. By Proposition 8.3 in \cite[page 132]{Aubin1993},
$U^- = \hat{U}$.

Therefore, we have
\[
U^+\le U^-,
\]
which, together with \eqref{eq:7.11.1} implies $U^- = U^+$.
\end{proof}

Again, it follows from \eqref{eq:7.12.7} that 
\[
U^+ = \inf_{\theta\in\Theta} \sup_{\varphi\in\ol{X_\Theta}}F(\theta,\varphi).
\]
On the other hand,
\[
U^-=\sup_{z\in X}\inf_{\theta\in\Theta}F(\theta,z) =
\sup_{\varphi\in\ol{X_\Theta}}\inf_{\theta\in\Theta}F(\theta,\varphi)
\le
\inf_{\theta\in\Theta}\sup_{\varphi\in\ol{X_\Theta}}F(\theta,\varphi),
\]
and thus by $U^-=U^+$ we obtain
\[
\sup_{\varphi\in\ol{X_\Theta}}\inf_{\theta\in\Theta}F(\theta,\varphi)
=
\inf_{\theta\in\Theta}\sup_{\varphi\in\ol{X_\Theta}}F(\theta,\varphi).
\]

Summarizing the previous analysis, we arrive at the following theorem.
\begin{theorem}
  The problem \eqref{eq:Nash} admits at least one Nash
  equilibrium. Specifically, $\bar{\varphi}$ is a solution of the
  problem \eqref{eq:7.12.3} and $\beta^\ast$ is a solution of the
  relaxed optimal actuator location problem \eqref{eq:optimal location
    relax}, if and only if the pair
  $(\bar{\theta}=(\beta^\ast)^2,\bar{\varphi})$ is a Nash equilibrium.
\end{theorem}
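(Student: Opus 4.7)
The plan is to invoke Theorem \ref{thm:Nash} from the Appendix, the standard characterization of Nash equilibria for a two-person zero-sum game: a pair $(\bar\theta,\bar\varphi)\in\Theta\times\ol{X_\Theta}$ is a Nash equilibrium for $F$ precisely when $\bar\theta$ attains $\inf_{\theta\in\Theta}\sup_{\varphi\in\ol{X_\Theta}}F(\theta,\varphi)$, $\bar\varphi$ attains $\sup_{\varphi\in\ol{X_\Theta}}\inf_{\theta\in\Theta}F(\theta,\varphi)$, and the two saddle values coincide. Existence of a Nash equilibrium thus reduces to verifying (i) equality of the two saddle values, and (ii) attainment on each side; the ``if and only if'' in the theorem statement will then follow from the same appendix theorem together with the identifications below.

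For ingredient (i), the proposition immediately preceding the theorem already establishes $U^+=U^-$ in the formulation \eqref{eq:U+-} over $X$, and the chain \eqref{eq:7.12.7} upgrades this to the equality $\inf_{\theta\in\Theta}\sup_{\varphi\in\ol{X_\Theta}}F(\theta,\varphi) = \sup_{\varphi\in\ol{X_\Theta}}\inf_{\theta\in\Theta}F(\theta,\varphi)$: once $\theta=\beta^2$ is fixed, density of $X$ in $\ol{X_\beta}$ together with continuity of $F(\theta,\cdot)$ with respect to $\ol{F_\beta}$ (and the inclusion $\ol{X_\Theta}\subseteq \ol{X_\beta}$) sandwiches the inner suprema over $X$, $\ol{X_\Theta}$ and $\ol{X_\beta}$ against one another. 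For ingredient (ii) on the $\inf\sup$ side, Theorem \ref{thm:beta-star} provides $\beta^\ast\in\cB$ minimizing $N(\beta)$; setting $\bar\theta:=(\beta^\ast)^2\in\Theta$ and using the identity $-\cJ(\beta)=\tfrac12 N(\beta)$ together with \eqref{eq:7.12.7}, one sees that $\bar\theta$ attains the outer infimum of the min-max problem. For the $\sup\inf$ side, Lemma \ref{lem:-F} supplies the unique minimizer $\bar\varphi$ of \eqref{eq:7.12.3}, which is exactly the attainer of $\sup_{\varphi\in\ol{X_\Theta}}\inf_{\theta\in\Theta}F(\theta,\varphi)$ after changing signs.

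With (i) and (ii) in hand, Theorem \ref{thm:Nash} immediately yields that $(\bar\theta,\bar\varphi)$ is a Nash equilibrium, proving the existence part. For the characterization, the same appendix theorem asserts that a pair $(\bar\theta,\bar\varphi)$ is a Nash equilibrium if and only if $\bar\theta$ and $\bar\varphi$ are optimal in the outer $\inf$ of the min-max problem and the outer $\sup$ of the max-min problem, respectively. The equivalences set up in the previous paragraph translate this, via $\bar\theta=(\beta^\ast)^2$, into the statement that $\beta^\ast$ solves the relaxed optimal actuator location problem \eqref{eq:optimal location relax} and $\bar\varphi$ solves \eqref{eq:7.12.3}; this is exactly the ``if and only if'' in the theorem.

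The main subtlety is not a new estimate but a bookkeeping one: Theorem \ref{thm:beta-star} was obtained by taking suprema over the space $X$, whereas the Nash problem \eqref{eq:Nash} is posed over the completion $\ol{X_\Theta}$, and the completions $\ol{X_\beta}$ (for fixed $\beta$) and $\ol{X_\Theta}$ do not coincide in general. The sandwich \eqref{eq:7.12.7}, forcing all relevant suprema to agree once $\theta=\beta^2$ with $\beta\in\cB$, is the key bridge and the only place where real care is required; once that bridge is in place, the proof is a direct application of the minimax theorem quoted from the Appendix.
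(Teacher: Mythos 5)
Your proposal is correct and follows essentially the same route as the paper, which presents this theorem as a summary of the preceding analysis: the paper likewise solves \eqref{eq:7.12.1} by taking $\bar{\theta}=(\beta^\ast)^2$ via Theorem \ref{thm:beta-star} and the sandwich \eqref{eq:7.12.7}, solves \eqref{eq:7.12.3} via Lemma \ref{lem:-F}, upgrades $U^+=U^-$ to the equality of the $\inf\sup$ and $\sup\inf$ over $\ol{X_\Theta}$, and then invokes Theorem \ref{thm:Nash}. Your flagging of the $\ol{X_\beta}$ versus $\ol{X_\Theta}$ bookkeeping is exactly the point the paper handles with \eqref{eq:7.12.7}.
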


Consequently, we can characterize the solution of the relaxed optimal
location problem \eqref{eq:optimal location relax} via a Nash equilibrium.
\begin{theorem}
  There exists at least one solution of the problem \eqref{eq:optimal location
    relax}. In addition, $\beta^\ast$ is a relaxed optimal actuator
  location of the optimal minimal norm controls if and only if there
  exists $\bar{\varphi}\in\ol{X_\Theta}$ such that the pair
  $(\beta^\ast,\bar{\varphi})$ is a Nash equilibrium
  of the following two-person zero-sum game problem: to find
  $(\beta^\ast,\bar{\varphi})\in\cB\times\ol{X_\Theta}$ such that
\begin{align}
  \label{eq:Nash char}
  \frac{1}{2}\int_0^T\E\|\beta^\ast \bar{\varphi}\|^2 dt +
  \E(y_0,\bar{\varphi}(0))
& = \sup_{\beta\in\cB}\left[\frac{1}{2}\int_0^T\E\|\beta \bar{\varphi}\|^2 dt +
  \E(y_0,\bar{\varphi}(0))\right],\notag\\
 \frac{1}{2}\int_0^T\E\|\beta^\ast \bar{\varphi}\|^2 dt +
  \E(y_0,\bar{\varphi}(0))
& = \inf_{\varphi\in\ol{X_\Theta}}\left[\frac{1}{2}\int_0^T\E\|\beta^\ast \varphi\|^2 dt +
  \E(y_0,\varphi(0))\right].
\end{align}
\end{theorem}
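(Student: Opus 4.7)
The plan is to read this theorem as the translation of the previously established Nash equilibrium theorem into the variables $(\beta,\varphi)$ that are native to the actuator location problem, together with an appeal to Theorem \ref{thm:beta-star} for existence. Since all the hard analytic work (compactness of $\Theta$ in weak$^\ast$ topology, coercivity/continuity of the relevant functionals on $\ol{X_\Theta}$, and the minimax identity $U^-=U^+$) has already been carried out, this final theorem should follow by assembling those pieces and performing the change of variables $\theta = \beta^2$.

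First I would dispose of the existence half: Theorem \ref{thm:beta-star} directly supplies a solution $\beta^\ast \in \cB$ of \eqref{eq:optimal location relax}, so the first sentence of the statement is immediate. Next I would invoke the preceding theorem (the one establishing existence of a Nash equilibrium for \eqref{eq:Nash}), which already states that $(\bar\theta = (\beta^\ast)^2, \bar\varphi)$ is a Nash equilibrium of \eqref{eq:Nash} if and only if $\beta^\ast$ solves \eqref{eq:optimal location relax} and $\bar\varphi$ solves \eqref{eq:7.12.3}, the latter being guaranteed by Lemma \ref{lem:-F}. This reduces the theorem to a pure bookkeeping step: showing that the Nash saddle conditions \eqref{eq:Nash} for $F$ are equivalent to the two identities \eqref{eq:Nash char}.

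For that equivalence, I would use the bijection $\theta \leftrightarrow \beta = \theta^{1/2}$ between $\Theta$ and $\cB$ given by \eqref{eq:theta-beta}, together with the identity
\[
-F(\theta,\varphi) \;=\; \tfrac{1}{2}\int_0^T \E\|\theta^{1/2}\varphi\|^2\,dt + \E(y_0,\varphi(0))
\;=\; \tfrac{1}{2}\int_0^T \E\|\beta\varphi\|^2\,dt + \E(y_0,\varphi(0)).
\]
Under this substitution, the saddle relation $F(\bar\theta,\bar\varphi) = \sup_{\varphi\in\ol{X_\Theta}} F(\bar\theta,\varphi)$ becomes the second line of \eqref{eq:Nash char} (the $\inf$ over $\varphi$ of the quadratic-plus-linear functional with weight $\beta^\ast$), while $F(\bar\theta,\bar\varphi) = \inf_{\theta\in\Theta} F(\theta,\bar\varphi)$ becomes the first line (the $\sup$ over $\beta\in\cB$ with $\bar\varphi$ frozen). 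The ranges of the $\sup$ and $\inf$ match because $\{\theta^{1/2}:\theta\in\Theta\} = \cB$.

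There is essentially no new obstacle. The only step one must be slightly careful with is the range identification $\cB = \{\theta^{1/2}:\theta\in\Theta\}$ (checking that $\|\beta\|^2 = \alpha|D|$ iff $\int_D \beta^2\,dx = \alpha|D|$, which is tautological), and the fact that the quadratic form $\int_0^T \E\|\beta\varphi\|^2\,dt$ and the linear term $\E(y_0,\varphi(0))$ are well-defined on $\ol{X_\Theta}$ and on $\cB$ respectively --- but both facts are contained in Lemma \ref{lem:X-theta}, Remark \ref{re:beta-phi}, and the boundedness of the trace operator $\cT_\beta$ used in Section \ref{ss:optimal control}. Assembling these ingredients concludes the proof.
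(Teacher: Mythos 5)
Your proposal is correct and follows essentially the same route the paper intends: the paper offers no separate proof of this final theorem, deriving it directly ("Consequently") from Theorem \ref{thm:beta-star}, Lemma \ref{lem:-F}, and the preceding Nash equilibrium theorem via the substitution $\theta=\beta^2$ and the sign flip $-F(\theta,\varphi)=\tfrac{1}{2}\int_0^T\E\|\theta^{1/2}\varphi\|^2\,dt+\E(y_0,\varphi(0))$, which is exactly the bookkeeping you carry out. Your identification of which saddle condition becomes which line of \eqref{eq:Nash char} under the negation, and your check that $\theta\mapsto\theta^{1/2}$ is a bijection between $\Theta$ and $\cB$, are the only details that needed verifying, and you handle both correctly.
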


\appendix
\section{Appendix}
Let us recall some basics for the two-person zero-sum game
problem; for more details, see for example \cite[Chapter
8]{Aubin1993}.

There are two players: Emil and Francis. Consider a real-valued function $f:
E\times F\to \R$, where $f(x,y)$ is both the loss of Emil by taking
the strategy $x$ from her strategy set $E$ and the gain of Francis by
taking the strategy $y$ from his strategy set $F$ (the sum of the
gains is zero). Emil wants to minimize the function $f$, while Francis
wants to maximize $f$. The most important concept in the two-person
zero-sum game is the {\it Nash equilibrium}.
\begin{definition}
  Suppose that $E$ and $F$ are strategy sets of Emil and Francis,
  respectively. Let $f:E\times F\to \R$ be an index cost
  functional. We say $(\bar{x},\bar{y})\in E\times F$ is a Nash
  equilibrium, if
\[
f(\bar{x},y)\le f(\bar{x},\bar{y})\le f(x,\bar{y}),\ \forall x\in E,
y\in F.
\]
\end{definition}
The following result is well known, see, for instance, Proposition 8.1
in \cite[page 121]{Aubin1993}, which says seeking a Nash equilibrium
is equivalent to solving a minimax and a maxmini
problems, respectively, so that the extremes achieved are the same.
\begin{theorem}
  \label{thm:Nash}
  The pair $(\bar{x},\bar{y})$ is a Nash equilibrium if and only if 
\[
\sup_{y\in F} f(\bar{x},y) = V^+,\quad \inf_{x\in E}
f(x,\bar{y}) = V^-,
\]
and $V^+ = V^-$, where
\[
V^+:= \inf_{x\in E}\sup_{y\in F}f(x,y),\quad V^-:=\sup_{y\in
  F}\inf_{x\in E} f(x,y).
\]
\end{theorem}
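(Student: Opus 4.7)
The plan is to derive both directions directly from the pointwise inequalities defining a Nash equilibrium, combined with the elementary weak duality bound $V^- \le V^+$, which holds for any $f : E \times F \to \R$ without any structural hypothesis. I will first record this weak duality as a one-line observation: for arbitrary $x_0 \in E$ and $y_0 \in F$, $\inf_{x} f(x,y_0) \le f(x_0,y_0) \le \sup_{y} f(x_0,y)$, and taking $\sup_{y_0}$ on the left and $\inf_{x_0}$ on the right gives $V^- \le V^+$.

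For the ``only if'' direction, I will assume that $(\bar{x},\bar{y})$ is a Nash equilibrium, i.e.\ $f(\bar{x},y) \le f(\bar{x},\bar{y}) \le f(x,\bar{y})$ for every $x \in E, y \in F$. Taking $\sup_{y}$ on the left of the first inequality (and noting the reverse is trivial by choosing $y = \bar{y}$) yields $\sup_{y} f(\bar{x},y) = f(\bar{x},\bar{y})$; symmetrically, $\inf_{x} f(x,\bar{y}) = f(\bar{x},\bar{y})$. Then
\[
V^+ \le \sup_{y} f(\bar{x},y) = f(\bar{x},\bar{y}) = \inf_{x} f(x,\bar{y}) \le V^-,
\]
and combining with $V^- \le V^+$ pinches $V^+ = V^- = f(\bar{x},\bar{y})$. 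In particular $\sup_{y} f(\bar{x},y) = V^+$ and $\inf_{x} f(x,\bar{y}) = V^-$, as required.

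For the ``if'' direction, I will assume the three scalar equalities $\sup_{y} f(\bar{x},y) = V^+$, $\inf_{x} f(x,\bar{y}) = V^-$, and $V^+ = V^-$. Then for any $y \in F$,
\[
f(\bar{x},y) \le \sup_{y'} f(\bar{x},y') = V^+ = V^- = \inf_{x'} f(x',\bar{y}) \le f(\bar{x},\bar{y}),
\]
where the final step uses $\bar{x}$ as a feasible competitor in the infimum. Symmetrically, for any $x \in E$,
\[
f(x,\bar{y}) \ge \inf_{x'} f(x',\bar{y}) = V^- = V^+ = \sup_{y'} f(\bar{x},y') \ge f(\bar{x},\bar{y}).
\]
Concatenating, $f(\bar{x},y) \le f(\bar{x},\bar{y}) \le f(x,\bar{y})$ for all $x,y$, which is the Nash equilibrium condition.

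I do not anticipate any genuine obstacle: the statement is a purely order-theoretic characterization requiring neither convexity, continuity, nor compactness, and the proof reduces to chaining a handful of inequalities around the weak duality $V^- \le V^+$. The only mild care is to keep the roles of $x$ and $y$ (and the direction of inequalities) straight; once the one-line weak duality lemma is isolated, the two implications become fully symmetric and essentially mechanical.
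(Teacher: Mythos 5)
Your proof is correct: both directions follow exactly as you write them, and the weak duality observation $V^- \le V^+$ is all that is needed to close the pinching argument. The paper itself gives no proof of this theorem --- it simply cites Proposition 8.1 of Aubin's \emph{Optima and Equilibria} --- and your argument is precisely the standard order-theoretic one found there, so there is nothing to reconcile; no convexity, compactness, or topology enters, just as you anticipated.
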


\end{document}